\sloppy\pagestyle{plain}
\newtheorem{theorem}[equation]{Theorem}
\newtheorem{proposition}[equation]{Proposition}
\newtheorem{lemma}[equation]{Lemma}
\newtheorem{corollary}[equation]{Corollary}
\theoremstyle{definition}
\theoremstyle{remark}
\newtheorem{remark}[equation]{Remark}
\makeatletter\@addtoreset{equation}{section} \makeatother
\def\A {\mathfrak{A}}
\def\SS {\mathfrak{S}}
\def\P {\mathbb{P}}
\author{Ivan Cheltsov, Victor Przyjalkowski, Constantin Shramov}
\title{Quartic double solids with icosahedral symmetry}
\address{\emph{Ivan Cheltsov}
\newline
\textnormal{School of Mathematics, The University of Edinburgh,  Edinburgh EH9 3JZ, UK.}
\newline
\textnormal{National Research University Higher School of Economics, Russian Federation, AG Laboratory, HSE, 7 Vavilova str., Moscow, 117312, Russia.}
\newline
\textnormal{\texttt{I.Cheltsov@ed.ac.uk}}}
\address{\emph{Victor Przyjalkowski}
\newline
\textnormal{Steklov Institute of Mathematics, 8 Gubkina street, Moscow 119991, Russia.}
\newline
\textnormal{National Research University Higher School of Economics, Russian Federation, AG Laboratory, HSE, 7 Vavilova str., Moscow, 117312, Russia.}
\newline
\textnormal{\texttt{victorprz@mi.ras.ru}}}
\address{\emph{Constantin Shramov}
\newline
\textnormal{Steklov Institute of Mathematics, 8 Gubkina street, Moscow 119991, Russia.}
\newline
\textnormal{National Research University Higher School of Economics, Russian Federation, AG Laboratory, HSE, 7 Vavilova str., Moscow, 117312, Russia.}
\newline
\textnormal{\texttt{costya.shramov@gmail.com}}}
\begin{document}

\begin{abstract}
We study quartic double solids admitting icosahedral symmetry.
\end{abstract}

\sloppy

\maketitle

\section{Introduction}
\label{section:into}

To study possible embeddings of a finite group $G$ into the Cremona group
$$
\mathrm{Cr}_n(\mathbb{C})=\mathrm{Bir}(\mathbb{P}^n),
$$
one should first describe all $n$-dimensional $G$-Mori fiber spaces (see \cite[Definition~1.1.5]{CheltsovShramov}),
and then to decide which of these $G$-Mori fiber spaces are rational and which are not.
To describe all such embeddings up to conjugation, one should also describe $G$-birational maps between the resulting rational $G$-Mori fiber spaces.
A priori, all three problems (classification, rationality and conjugation) can be solved for any given group~$G$.
In fact, most of the results here are in dimensions
$n=2$ (see \cite{DoIs06} for a nearly complete classification),
and~\mbox{$n=3$} (where only some very special groups have been treated).

In \cite{Pr09}, Prokhorov managed to find all isomorphism classes of
finite \emph{simple} non-abelian subgroups of $\mathrm{Cr}_3(\mathbb{C})$.
He proved that the six groups: $\mathfrak{A}_5$ (alternating group of degree $5$), $\mathrm{PSL}_2(\mathbb{F}_7)$, $\mathfrak{A}_6$,
$\mathrm{SL}_2(\mathbb{F}_8)$, $\mathfrak{A}_7$, and $\mathrm{PSp}_4(\mathbb{F}_3)$
are the only non-abelian finite simple subgroups of $\mathrm{Cr}_3(\mathbb{C})$.
The former three of these six groups actually admit embeddings to $\mathrm{Cr}_2(\mathbb{C})$,
and the icosahedral group $\mathfrak{A}_5$ is also realized as a subgroup of~\mbox{$\mathrm{Cr}_1(\mathbb{C})=\mathrm{PGL}_2(\mathbb{C})$},
while the latter three groups are new three-dimensional artefacts.

The groups $\mathrm{SL}_2(\mathbb{F}_8)$, $\mathfrak{A}_7$, and $\mathrm{PSp}_4(\mathbb{F}_3)$ do not act faithfully
on three-dimensional conic bundles and del Pezzo fibrations, because they do not act faithfully on $\mathbb{P}^1$ and
they do not act faithfully on smooth del Pezzo surfaces (see, for example, \cite{DoIs06}).
Thus, the only $G$-Mori fiber spaces with
$$
G\in\left\{\mathrm{SL}_2(\mathbb{F}_8), \mathfrak{A}_7,\mathrm{PSp}_4(\mathbb{F}_3)\right\}
$$
are $G$-Fano threefolds, i.e., Fano threefolds with terminal singularities such that all \mbox{$G$-invariant} Weil divisors
on them are $\mathbb{Q}$-linearly equivalent to multiples of the anticanonical ones.
In \cite{Pr09}, Prokhorov classified all $G$-Fano threefolds, where $G$ is one of the latter three groups.
His results together with \cite{Beauville} and \cite[Corollary~1.22]{ChSh09b}
imply that $\mathrm{Cr}_3(\mathbb{C})$ contains two (unique, respectively) subgroups isomorphic to $\mathrm{PSp}_4(\mathbb{F}_3)$
(to $\mathrm{SL}_2(\mathbb{F}_8)$ or to $\mathfrak{A}_7$, respectively) up to conjugation.

The papers \cite{ChSh10a} and \cite{ChSh09b} describe several non-conjugate embeddings of the groups
$\mathrm{PSL}_2(\mathbb{F}_7)$ and $\mathfrak{A}_6$ into $\mathrm{Cr}_3(\mathbb{C})$ using a similar approach,
although a complete answer is not known in these two cases.
The book \cite{CheltsovShramov} is devoted to the icosahedral
subgroups in~\mbox{$\mathrm{Cr}_3(\mathbb{C})$}.
In particular, it describes three non-conjugate embeddings
of the group~$\mathfrak{A}_5$ into~\mbox{$\mathrm{Cr}_3(\mathbb{C})$}.

Since quartic double solids are known to have interesting geometrical properties,
it is tempting to study those of them who admit an icosahedral symmetry.
Rational quartic double solids of this kind are of special interest,
since they would provide embeddings of the group $\mathfrak{A}_5$ into $\mathrm{Cr}_3(\mathbb{C})$.
In this paper, we describe all quartic double solids with an action of $\mathfrak{A}_5$,
and study their rationality using the results obtained in the prequel \cite{CheltsovPrzyjalkowskiShramov}.
In particular, we construct one more embedding
$$
\mathfrak{A}_5\hookrightarrow\mathrm{Cr}_3(\mathbb{C}),
$$
and we show that this embedding is not conjugate to any of the three embeddings described in \cite[\S1.4]{CheltsovShramov}.

\medskip

{\bf Acknowledgements.}
This work was started when Ivan Cheltsov and Constantin Shramov were visiting the Mathematisches Forschungsinstitut Oberwolfach
under Research in Pairs program, and was completed when they were visiting Centro Internazionale per la Ricerca Matematica in Trento
under a similar program.
We want to thank these institutions for excellent working conditions,
and personally Marco Andreatta and Augusto Micheletti for hospitality.

This work was supported within the framework of a subsidy granted to the HSE
by the Government of the Russian Federation for the implementation of the Global Competitiveness Program.
Victor Przyjalkowski was supported by the grants RFFI 15-51-50045, RFFI 15-01-02158, RFFI 15-01-02164, RFFI 14-01-00160,
MK-696.2014.1, and NSh-2998.2014.1, and by Dynasty foundation.
Constantin Shramov was supported by the grants  RFFI 15-01-02158,
RFFI 15-01-02164, RFFI 14-01-00160, MK-2858.2014.1, and NSh-2998.2014.1,
and by Dynasty foundation.

\section{Quartic double solids with an action of $\mathfrak{A}_5$}
\label{section:quartic-double-solids}

Let $\tau\colon X\to\mathbb{P}^3$ be double cover branched over a (possibly reducible) reduced quartic surface $S$.

\begin{remark}
\label{remark:Tikhomirov-Voisin}
Recall from \cite[Theorem~5]{Tihomirov3} and \cite[Corollary~4.7(b)]{Voisin88} that $X$ is non-rational provided that the surface $S$ is smooth.
Therefore, we will be mostly interested in the cases when $S$ is singular.
\end{remark}

Suppose that $X$ admits a faithful action of the icosahedral group $\mathfrak{A}_5$.

\begin{remark}
\label{remark:lifting-shifting}
Since $\tau$ is given by the linear system $|-\frac{1}{2}K_X|$,
we see that the action of~$\mathfrak{A}_5$ descends to its action on~$\mathbb{P}^3$ and preserves the quartic $S$.
Vice versa, every \mbox{$\mathfrak{A}_5$-action} on~$\mathbb{P}^3$ that leaves the quartic surface
$S$ invariant can be lifted to the corresponding double cover~$X$.
Indeed, $X$ has a natural embedding to the projectivisation of
the vector bundle
$$\mathcal{E}=\mathcal{O}_{\P^2}\oplus\mathcal{O}_{\P^2}(2).$$
The group $\A_5$ acts on the projective space $\P^3$, so that either
$\A_5$ or its central extension $2.\A_5$ act on the line bundle
$\mathcal{O}_{\P^2}(1)$. In any case, the action of $\A_5$ lifts to the
line bundle $\mathcal{O}_{\P^2}(2)$ and thus to $\mathcal{E}$.
It remains to notice that the group $\A_5$ does not have non-trivial
characters, so that one can write down an $\A_5$-invariant
equation of $X$ in the projectivisation of~$\mathcal{E}$.
\end{remark}

Up to conjugation, the group
$$
\mathrm{Aut}(\mathbb{P}^3)\cong\mathrm{PGL}_4(\mathbb{C})
$$
contains five subgroups isomorphic to $\mathfrak{A}_5$ (cf. \cite[Chapter~VII]{Bli17}).
Namely, denote by $I$ the trivial representation of $\mathfrak{A}_5$.
Let $W_3$ and $W_2^\prime$ be the two irreducible three-dimensional representations of $\mathfrak{A}_5$,
and let $W_4$ be the irreducible four-dimensional representation of~$\mathfrak{A}_5$.
Let $U_2$ and $U_2^\prime$ be two non-isomorphic two-dimensional representations of
the central extension $2.\mathfrak{A}_5$ of the group $\mathfrak{A}_5$ by $\mathbb{Z}/2\mathbb{Z}$,
and let $U_4\cong\mathrm{Sym}^3(U_2)$ be the faithful four-dimensional irreducible representation of $2.\mathfrak{A}_5$.
Then $\mathbb{P}^3$ equipped with a faithful action of the group $\mathfrak{A}_5$
can be identified with one of the following projective spaces: $\mathbb{P}(W_4)$, $\mathbb{P}(I\oplus W_3)$,
$\mathbb{P}(U_2\oplus U_2)$, $\mathbb{P}(U_2\oplus U_2^\prime)$, or $\mathbb{P}(U_4)$.

\begin{remark}
\label{remark:sym-sym}
Computing the symmetric powers  $\mathrm{Sym}^2(U_2\oplus U_2)$ and $\mathrm{Sym}^4(U_2\oplus U_2)$,
we see that the only $\mathfrak{A}_5$-invariant quartic surface in $\mathbb{P}(U_2\oplus U_2)$ is not reduced
(it is the unique $\mathfrak{A}_5$-invariant quadric taken with multiplicity two).
Similarly, we see that there are no $\mathfrak{A}_5$-invariant quartic surfaces in $\mathbb{P}(U_2\oplus U_2^\prime)$ at all.
\end{remark}

Since the quartic surface $S$ is reduced, our $\mathbb{P}^3$ can be identified with either  $\mathbb{P}(U_4)$, or $\mathbb{P}(I\oplus W_3)$, or $\mathbb{P}(W_4)$. Let us start with the case $\mathbb{P}^3=\mathbb{P}(U_4)$.

\begin{theorem}
\label{theorem:U4}
Suppose that $\mathbb{P}^3=\mathbb{P}(U_4)$.
Then $\mathfrak{A}_5$-invariant quartic surfaces in $\mathbb{P}^3$ form a pencil $\mathcal{P}$.
This pencil contains exactly two (isomorphic) surfaces with non-isolated
singularities, and exactly two nodal surfaces.

Let $\mathcal{S}$ be one of the surfaces with non-isolated singularities in
$\mathcal{P}$.
Then the singular locus of $\mathcal{S}$ is a twisted cubic curve $\mathscr{C}$.
Moreover, if~$S=\mathcal{S}$, then there exists a commutative diagram
\begin{equation}
\label{equation:U4}
\xymatrix{
\widetilde{X}\ar@{->}[d]_{\rho}\ar@{->}[rrrr]^{\varphi}\ar@{->}[rrd]^{\widetilde{\tau}}&&&&Q\ar@{->}[d]^{\tau_Q}\\
X\ar@{->}[drr]_{\tau}&&\widetilde{\mathbb{P}}^3\ar@{->}[d]^{\sigma}\ar@{->}[rr]^{\phi}&&\mathbb{P}^2\\
&&\mathbb{P}^3\ar@{-->}[rru]_{\psi}.&&}
\end{equation} %
Here the morphism $\sigma$ is a blow up of the curve $\mathscr{C}$,
the morphism  $\rho$ is a blow up of the preimage of the curve $\mathscr{C}$ on $X$,
the morphism  $\widetilde{\tau}$ is a double cover branched over the proper transform of the surface $\mathcal{S}$ on $\widetilde{\mathbb{P}}^3$,
the rational map $\psi$ is given by the linear system of quadrics in $\mathbb{P}^3$ passing through $\mathscr{C}$,
the surface $Q$ is a smooth quadric, the morphism $\tau_{Q}$ is a double cover branched over the unique $\mathfrak{A}_5$-invariant conic in $\mathbb{P}^2$,
and $\phi$ and $\varphi$ are $\mathbb{P}^1$-bundles.
In particular, in this case $X$ is rational.
\end{theorem}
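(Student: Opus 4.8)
The plan is to establish the three assertions in turn, with the rationality of $X$ as the culmination. Throughout I identify $\P^3=\P(U_4)$ with $\P(\mathrm{Sym}^3U_2)$, so that the rational normal cubic $\mathscr{C}=\{v^3:v\in U_2\}$ is the image of the Veronese embedding $\P(U_2)=\P^1\hookrightarrow\P^3$; it is $2.\A_5$-invariant, hence $\A_5$-invariant. Since $2.\A_5$ has two faithful two-dimensional representations $U_2$ and $U_2'$ with $\mathrm{Sym}^3U_2\cong\mathrm{Sym}^3U_2'\cong U_4$ (both being faithful four-dimensional irreducibles, of which there is only one), there are two distinct invariant twisted cubics in $\P^3$, interchanged by the normaliser of $\A_5$ in $\mathrm{Aut}(\P^3)$ and hence projectively equivalent. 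First I would compute the space of invariant quartics as $\big(\mathrm{Sym}^4U_4^\ast\big)^{\A_5}$ by a character inner product; the outcome that this space is two-dimensional yields the pencil $\mathcal{P}\cong\P^1$.

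The key structural observation, and the heart of the rationality argument, is that a quartic which is \emph{double} along a twisted cubic is the pullback of a conic. Indeed, the ideal $I_{\mathscr{C}}$ is generated in degree two by three quadrics $q_0,q_1,q_2$, and in degree four one has $(I_{\mathscr{C}}^2)_4=\mathrm{Sym}^2\langle q_0,q_1,q_2\rangle$; hence any quartic singular along $\mathscr{C}$ has the form $\mathcal{S}=\{F(q_0,q_1,q_2)=0\}=\psi^\ast(C)$, where $\psi\colon\P^3\dashrightarrow\P^2$ is the map given by $|I_{\mathscr{C}}(2)|$ and $C=\{F=0\}$ is a conic. By the $\A_5$-equivariance of $\psi$ (which follows from the invariance of $\mathscr{C}$) the conic $C$ is $\A_5$-invariant. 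Since $\P^2=\P(\mathrm{Sym}^2U_2)$ is the projectivisation of a three-dimensional irreducible representation of $\A_5$, which carries a unique invariant quadratic form, $C$ is the unique invariant conic and is smooth. Applied to the two invariant twisted cubics, this produces two surfaces with $\mathrm{Sing}(\mathcal{S})=\mathscr{C}$ and explains why they are isomorphic.

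With $\mathcal{S}=\psi^\ast(C)$ in hand I would assemble the diagram \eqref{equation:U4}. Blowing up $\mathscr{C}$ resolves $\psi$ into a morphism $\phi\colon\widetilde{\P}^3\to\P^2$ whose fibres are the proper transforms of the chords of $\mathscr{C}$; this is the standard presentation of $\widetilde{\P}^3=\mathrm{Bl}_{\mathscr{C}}\P^3$ as a $\P^1$-bundle over $\P^2$. Since $\mathcal{S}$ is double along $\mathscr{C}$, one has $\sigma^\ast\mathcal{S}=\widetilde{\mathcal{S}}+2E$, and writing $\sigma^\ast q_i=e\,\tilde q_i$ with $e$ the equation of $E$ gives $\sigma^\ast F(q_0,q_1,q_2)=e^2\,\phi^\ast F$. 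Thus the defining equation $w^2=F(q_0,q_1,q_2)$ of $X$ pulls back to $\tilde w^2=\phi^\ast F$ with $\tilde w=w/e$; that is, after the blow up $\rho$ the threefold $\widetilde{X}$ is the fibre product $\widetilde{\P}^3\times_{\P^2}Q$, where $\tau_Q\colon Q\to\P^2$ is the double cover branched along $C$. As the double cover of $\P^2$ branched over a smooth conic, $Q=\{v^2=F\}\subset\P^3$ is a smooth quadric. Base change preserves $\P^1$-bundles, so $\varphi\colon\widetilde{X}\to Q$ is a $\P^1$-bundle, which completes the diagram. Because $Q\cong\P^1\times\P^1$ is rational and $\widetilde{X}$ is a $\P^1$-bundle over it, $\widetilde{X}$ is rational; since $\rho$ is a blow up, so is $X$.

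The one step that is genuinely computational, and where I expect the real work to lie, is the enumeration of the singular members of $\mathcal{P}$: showing that apart from the two surfaces $\psi^\ast(C)$ with non-isolated singularities the pencil contains \emph{exactly} two nodal surfaces and that all other members are smooth. I would carry this out by writing explicit $\A_5$-invariant generators for the two quartics spanning $\mathcal{P}$, forming the one-parameter family and analysing its discriminant (equivalently, computing the Jacobian ideal of a general member and locating the degenerations along the base locus). The symmetry forces the non-smooth members to occur in $\A_5$-orbits, and a degree count on the discriminant, together with the two already-identified members with non-isolated singularities, should pin the number of nodal members down to two. By contrast, the first and third paragraphs are, respectively, representation-theoretic bookkeeping and the clean fibre-product argument above.
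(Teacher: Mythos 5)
Your treatment of the structural half of the theorem is sound and close in spirit to the paper's. The identification $\mathbb{P}^3=\mathbb{P}(\mathrm{Sym}^3U_2)$, the two invariant twisted cubics coming from $U_2$ and $U_2'$, and the presentation $\mathcal{S}=\psi^*(C)$ followed by the fibre-product description $\widetilde{X}=\widetilde{\mathbb{P}}^3\times_{\mathbb{P}^2}Q$ all check out; the paper reaches the same diagram slightly more geometrically (preimages of secant lines split, preimages of tangent lines lie in the ramification), and your local computation $\sigma^*F(q)=e^2\phi^*F$ is arguably cleaner. One small point you should make explicit: the step ``any quartic singular along $\mathscr{C}$ is $F(q_0,q_1,q_2)$'' is the assertion that the degree-$4$ part of the symbolic square of $I_{\mathscr{C}}$ equals $\mathrm{Sym}^2\langle q_0,q_1,q_2\rangle$; this is true but needs a line of justification (e.g.\ the count $h^0(I_{\mathscr{C}}(4))-h^0(N^*_{\mathscr{C}}(4))=22-16=6$, or the identity $4\sigma^*H-2E=\phi^*\mathcal{O}_{\mathbb{P}^2}(2)$ on the blow up).

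The genuine gap is in the enumeration of the singular members of $\mathcal{P}$, which is a substantive part of the statement and not a routine computation. First, you exhibit two members with non-isolated singularities but never show there are no others: for this one must prove that a member singular along an $\mathfrak{A}_5$-invariant curve $Z$ forces $\deg Z\leqslant 3$ (via a general plane section) and hence $Z\in\{\mathscr{C},\mathscr{C}'\}$, and also that $\mathscr{C}$ is not in the base locus of $\mathcal{P}$ (the paper needs the projective normality of $\mathscr{C}$ and a one-dimensional subrepresentation of $H^0(\mathcal{O}_{\mathscr{C}}(4))$ for this). Second, ``a degree count on the discriminant should pin the number of nodal members down to two'' does not work as stated: the discriminant of a pencil of quartic surfaces has large degree, each nodal member contributes with multiplicity equal to its number of nodes, the two developable surfaces contribute with multiplicities you have not computed, and nothing a priori rules out members with worse isolated singularities. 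The paper instead argues structurally: restricting $U_4$ to $2.\mathfrak{A}_4$, $2.\mathrm{D}_{10}$, $2.\mathrm{D}_4$, $2.\mathfrak{S}_3$ shows $\mathbb{P}(U_4)$ has no orbits of length $5$, $6$, $15$ and exactly two of length $10$; then a singular member with isolated singularities has only Du Val points (at most two non-Du Val points by Umezu/Shokurov, and no orbit that small), is in fact nodal with $|\mathrm{Sing}(S)|\in\{5,6,10,12,15\}$ by the cited lemmas, the case of $12$ nodes is excluded by intersecting with $\mathscr{C}$ ($12=S\cdot\mathscr{C}\geqslant 24$), and the two orbits of length $10$ yield exactly two nodal members. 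Without some version of this orbit analysis your proof of ``exactly two nodal surfaces'' is missing.
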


\begin{proof}
Restrict $U_4$ to the subgroups of $2.\mathfrak{A}_5$ isomorphic to
$2.\mathfrak{A}_4$, $2.\mathrm{D}_{10}$, and $2.\mathrm{D}_{4}$,
where $\mathrm{D}_{2n}$ denotes a dihedral group order $2n$.
We see that none  of these representations has one-dimensional subrepresentations.
Thus, the projective space $\mathbb{P}^3$ does not contain
$\mathfrak{A}_5$-orbits of lengths $5$, $6$, and $15$.
On the other hand, restricting $U_4$ to a subgroup of $2.\A_5$ isomorphic
to $2.\SS_3$,
we see that $\mathbb{P}^3$ contains exactly two $\A_5$-orbits of length $10$.

One has
$$
\mathrm{Sym}^2(U_4)=W_3\oplus W_3^\prime\oplus W_4.
$$
Denote by $\mathcal{Q}$ and $\mathcal{Q}^\prime$ the linear system of quadrics in $\P^3$ that correspond to $W_3$ and $W_3^\prime$, respectively.
Since $\mathbb{P}^3$ does not contain $\mathfrak{A}_5$-orbits of lengths less or equal to eight,
we see that the base loci of $\mathcal{Q}$ and $\mathcal{Q}^\prime$ contain $\mathfrak{A}_5$-invariant curves
$\mathscr{C}$ and $\mathscr{C}^\prime$, respectively.
The degrees of these curves must be less than four.
Since $U_4$ is an irreducible representation of $2.\mathfrak{A}_5$,
we see that both $\mathscr{C}$ and $\mathscr{C}^\prime$ are twisted cubic curves.
This also implies that the base loci of $\mathcal{Q}$ and $\mathcal{Q}^\prime$ are exactly the curves $\mathscr{C}$ and $\mathscr{C}^\prime$, respectively.

The lines in $\mathbb{P}^3$ that are tangent to the curves $\mathscr{C}$ and $\mathscr{C}^\prime$
sweep out quartic surfaces $\mathcal{S}$ and $\mathcal{S}^\prime$, respectively.
These surfaces are $\mathfrak{A}_5$-invariant.
The singular loci of $\mathcal{S}$ and $\mathcal{S}^\prime$ are the curves $\mathscr{C}$ and $\mathscr{C}^\prime$, respectively.
In particular, the surfaces $\mathcal{S}$ and $\mathcal{S}^\prime$ are different.
Their singularities along these curves are locally isomorphic to a product of $\mathbb{A}^1$ and a cusp.

Suppose that $S=\mathcal{S}$. Let us construct the commutative diagram~\eqref{equation:U4}.
Taking a blow up $\sigma\colon\widetilde{\mathbb{P}}^3\to\mathbb{P}^3$ of the curve $\mathscr{C}$,
we obtain an $\mathfrak{A}_5$-equivariant $\P^1$-bundle
$$
\phi\colon\widetilde{\mathbb{P}}^3\to\mathrm{Sym}^2(\mathscr{C})\cong\P^2
$$
that is a projectivisation of a stable rank two vector bundle  $\mathcal{E}$ on $\mathbb{P}^{2}$ with $c_{1}(\mathcal{E})=0$ and
$c_{2}(\mathcal{E})=2$ defined by the exact sequence
$$
0\longrightarrow\mathcal{O}_{\mathbb{P}_{2}}(-1)^{{}\oplus 2}\longrightarrow
\mathcal{O}_{\mathbb{P}_{2}}^{{}\oplus 4}\longrightarrow \mathcal{E}\otimes\mathcal{O}_{\mathbb{P}_{2}}(1)\longrightarrow 0,%
$$
see \cite[Application~1]{SzWi90} for details.

The fibers of $\phi$ are proper transforms of the secant or tangent lines to $\mathscr{C}$.
Moreover, the proper transforms of the tangent lines to $\mathscr{C}$ are mapped by $\phi$ to the points
of the unique $\mathfrak{A}_5$-invariant conic $C$ in $\mathbb{P}^2$.
Let $\tau_Q\colon Q\to\P^2$ be a double cover branched over $C$.
Then $Q$ is a smooth quadric surface.
A preimage on $X$ of a secant line to $\mathscr{C}$ splits as a union of two smooth rational curves,
while a preimage of a tangent line to $\mathscr{C}$ is contained in the ramification locus of $\tau$.
This shows the existence of the commutative diagram~\eqref{equation:U4} and the rationality of $X$.
The same construction applies to the case $S=\mathcal{S}^\prime$.

Denote by $\mathcal{P}$ the pencil generated by $\mathcal{S}$ and $\mathcal{S}^\prime$.
Computing $\mathrm{Sym}^4(U_4)$, we see that $\mathcal{P}$ contains all $\mathfrak{A}_5$-invariant quartic surfaces in $\mathbb{P}^3$.
Since $\mathscr{C}$ is projectively normal, there is an exact sequence of $2.\mathfrak{A}_5$-representations
$$
0\to H^0(\mathcal{O}_{\mathbb{P}^3}(4)\otimes\mathcal{I}_{\mathscr{C}})\to
H^0(\mathcal{O}_{\mathbb{P}^3}(4))\to H^0(\mathcal{O}_{\mathscr{C}}\otimes\mathcal{O}_{\mathbb{P}^3}(4))\to 0,%
$$
where $\mathcal{I}_{\mathscr{C}}$ is the ideal sheaf of $\mathscr{C}$.
The $2.\mathfrak{A}_5$-representation $H^0(\mathcal{O}_{\mathscr{C}}\otimes\mathcal{O}_{\mathbb{P}^3}(4))$
contains a one-dimensional subrepresentation corresponding to a unique $\mathfrak{A}_5$-orbit of length $12$ in~$\mathscr{C}\cong\mathbb{P}^1$.
This shows that $\mathcal{P}$ contains a surface that does not pass through $\mathscr{C}$,
so that $\mathscr{C}$ is not contained in the base locus of $\mathcal{P}$.
Similarly, we see that $\mathscr{C}^\prime$ is not contained in the base locus of $\mathcal{P}$.

Now we suppose that $S\ne\mathcal{S}$ and $S\ne\mathcal{S}^\prime$.
Let 
$S$ be singular.
Note that $S$ is irreducible, because $\P^3$ contains no $\mathfrak{A}_5$-invariant surfaces of degree less than four.

We claim that $S$ has isolated singularities.
Indeed, suppose that $S$ is singular along some $\mathfrak{A}_5$-invariant curve $Z$.
A general plane section of $S$ is an irreducible plane quartic curve that has $\mathrm{deg}(Z)$ singular points,
which implies that the degree of $Z$ is at most three.
Thus, $Z$ is a twisted cubic curve, so that either $Z=\mathscr{C}$ or $Z=\mathscr{C}^\prime$.
Since neither of these curves is contained in the base locus of $\mathcal{P}$,
this would imply that either $S=\mathcal{S}$ or $S=\mathcal{S}^\prime$.
The latter is not the case by assumption.

We see that the singularities of $S$ are isolated.
Hence, $S$ contains at most two non-Du Val singular points (cf. \cite{Degtyarev}).
This follows from \cite[Theorem~1]{Umezu} or from Shokurov's famous \cite[Theorem~6.9]{Shokurov3}
applied to the minimal resolution of singularities of the surface~$S$.
Since the set of all non-Du Val singular points of the surface~$S$ must be $\mathfrak{A}_5$-invariant,
we see that $S$ has none of them, because $U_4$ is an irreducible representation of the group~$2.\mathfrak{A}_5$.
Thus, all singularities of $S$ are Du Val.

By \cite[Lemma~6.7.3(iii)]{CheltsovShramov}, the surface $S$ is nodal,
the set $\mathrm{Sing}(S)$ consists of one $\mathfrak{A}_5$-orbit, and
$$
\big|\mathrm{Sing}(S)\big|\in\big\{5,6,10,12,15\big\}.
$$
Since $\mathbb{P}^3$ does not contain $\mathfrak{A}_5$-orbits
of lengths $5$, $6$, and $15$,
we see that $\mathrm{Sing}(S)$ is either
an $\A_5$-orbit of length $10$ or an $\A_5$-orbit of length $12$.

Suppose that the singular locus of $S$ is an $\A_5$-orbit
$\Sigma_{12}$ of length $12$.
Then $S$ does not contain other $\mathfrak{A}_5$-orbits of length $12$ by \cite[Lemma~6.7.3(iv)]{CheltsovShramov}.
Since $\mathscr{C}$ is not contained in the base locus of $\mathcal{P}$,
and $\mathscr{C}$ is contained in $\mathcal{S}$,
we see that~\mbox{$\mathscr{C}\not\subset S$}.
Since
$$S\cdot\mathscr{C}=12$$
and $\Sigma_{12}$ is the only $\mathfrak{A}_5$-orbit of length at most $12$ in~\mbox{$\mathscr{C}\cong\mathbb{P}^1$},
we have~\mbox{$S\cap\mathscr{C}=\Sigma_{12}$}.
Thus,
$$
12=S\cdot\mathscr{C}\geqslant \sum_{P\in\Sigma_{12}}\mathrm{mult}_{P}(S)=2|\Sigma_{12}|=24,
$$
which is absurd.

Therefore, we see that the singular locus of $S$
is an $\A_5$-orbit $\Sigma_{12}$ of length $10$.
Vice versa, if an $\A_5$-invariant quartic surface
passes through an $\A_5$-orbit of length $10$, then
it is singular by~\cite[Lemma~6.7.1(ii)]{CheltsovShramov}.
We know that there are exactly
two $\A_5$-orbits of length $10$ in $\P^3$, so that there are
two surfaces $\mathcal{R}$ and $\mathcal{R}^\prime$
that are singular exactly at the points of these two
$\A_5$-orbits, respectively. The above argument
shows that every surface in $\mathcal{P}$ except
$\mathcal{S}$, $\mathcal{S}^\prime$, $\mathcal{R}$ and $\mathcal{R}^\prime$
is smooth.
\end{proof}

It would be interesting to find out
whether the double covers of $\mathbb{P}^3$ branched over
nodal surfaces of the pencil $\mathcal{P}$ are rational or not.

\begin{remark}
\label{remark:pencil}
Let us use notation of the proof of Theorem~\ref{theorem:U4}.
Denote by $B$ the base locus of the pencil $\mathcal{P}$.
We claim that $B$ is an irreducible curve with $24$ cusps, and its normalization has genus nine.
Indeed, let $\rho\colon\widehat{\mathcal{S}}\to\mathcal{S}$ be the normalization of the surface $\mathcal{S}\in\mathcal{P}$ having non-isolated singularities,
let $\widehat{\mathscr{C}}$ be the preimage of the curve $\mathscr{C}$ via $\rho$,
and let $\widehat{B}$ be the preimage of the curve $B$ via $\rho$.
Then the action of $\mathfrak{A}_5$ lifts to $\widehat{\mathcal{S}}$,
one has $\widehat{\mathcal{S}}\cong\mathbb{P}^1\times\mathbb{P}^1$,
and $\rho^*(\mathcal{O}_\mathcal{S}\otimes\mathcal{O}_{\mathbb{P}^3}(1))$ is a divisor of bi-degree~$(1,2)$.
This shows that $\widehat{\mathscr{C}}$ is a divisor of bi-degree $(1,1)$,
and $\widehat{B}$ is a divisor of bi-degree $(4,8)$.
In particular, the action of $\mathfrak{A}_5$ on $\widehat{\mathcal{S}}$ is diagonal by \cite[Lemma~6.4.3(i)]{CheltsovShramov},
so that $\widehat{\mathscr{C}}$ is irreducible by~\mbox{\cite[Lemma~6.4.4(i)]{CheltsovShramov}}.
Note that the curve $\widehat{\mathscr{C}}$ is singular.
Indeed, recall that $\mathcal{S}^\prime$ denotes the second surface in
$\mathcal{P}$ with non-isolated singularities,
and its singular locus is the twisted cubic curve $\mathscr{C}^\prime$.
Since $\mathscr{C}^\prime$ is not contained in $\mathcal{S}$,
we see that the intersection $\mathcal{S}\cap\mathscr{C}^\prime$ is an $\mathfrak{A}_5$-orbit $\Sigma_{12}^\prime$ of length $12$.
Similarly, we see that the intersection $\mathcal{S}^\prime\cap\mathscr{C}$ is an $\mathfrak{A}_5$-orbit $\Sigma_{12}$ of length $12$.
These $\mathfrak{A}_5$-orbits are different,
since two different twisted cubic curves cannot have twelve points in common.
Since $B$ is the scheme theoretic intersection of the surfaces $\mathcal{S}$ and $\mathcal{S}^\prime$,
it must be singular at every point of $\Sigma_{12}\cup\Sigma_{12}^\prime$.
Denote by $\widehat{\Sigma}_{12}$ and $\widehat{\Sigma}_{12}^\prime$  the preimages via $\rho$
of the $\mathfrak{A}_5$-orbits $\Sigma_{12}$ and $\Sigma_{12}^\prime$, respectively.
Then
$$
|\widehat{\Sigma}_{12}|=|\widehat{\Sigma}_{12}^\prime|=12,
$$
and $\widehat{B}$ is singular in every point of $\widehat{\Sigma}_{12}^\prime$.
Moreover, $\widehat{B}$ is smooth away of $\widehat{\Sigma}_{12}^\prime$,
because its arithmetic genus is $21$, and the surface $\widehat{\mathcal{S}}$
does not contain $\mathfrak{A}_5$-orbits of length less than~$12$.
In particular, the genus of the normalization of $\widehat{B}$ (and thus also of $B$) equals nine.
On the other hand, we have
$$
\widehat{B}\cap\widehat{\mathscr{C}}=\widehat{\Sigma}_{12},
$$
because $\widehat{B}\cdot\mathscr{C}=12$ and $\widehat{\Sigma}_{12}\subset\widehat{B}$.
This shows that $B$ is an irreducible curve whose only singularities are the points of $\Sigma_{12}\cup\Sigma_{12}^\prime$,
and each such point is a cusp of the curve $B$.
\end{remark}

Now let us deal with the case when $\mathbb{P}^3$ is identified with $\mathbb{P}(I\oplus W_3)$.

\begin{theorem}
\label{theorem:I+W3}
Suppose that $\mathbb{P}^3=\mathbb{P}(I\oplus W_3)$.
Then there exists a unique $\mathfrak{A}_5$-invariant conic $C$ in $\mathbb{P}^3$.
All $\mathfrak{A}_5$-invariant quadric surfaces in $\mathbb{P}^3$ form a pencil $\mathcal{P}$.
Two general quadrics from this pencil are tangent to each other along $C$.
Moreover, any reduced $\mathfrak{A}_5$-invariant quartic surface in $\mathbb{P}^3$ is a union of two different quadrics from $\mathcal{P}$.
Furthermore, if $S$ is such a quartic surface, then there exists a commutative diagram
\begin{equation}
\label{equation:I+W3}
\xymatrix{
X\ar@{->}[d]_{\tau}&&\widetilde{X}\ar@{->}[d]_{\widetilde{\tau}}\ar@{->}[ll]_{\rho}&&\widehat{X}\ar@{->}[d]_{\widehat{\tau}}\ar@{->}[ll]_{\widetilde{\rho}}\ar@{->}[drr]^{\varphi}&&\\
\mathbb{P}^3\ar@{-->}[rrrrrrd]_{\psi}&&\widetilde{\mathbb{P}}^3\ar@{->}[ll]_{\sigma}&&\widehat{\mathbb{P}}^3\ar@{->}[ll]_{\widetilde{\sigma}}\ar@{->}[drr]^{\phi}&&\mathbb{P}^1\ar@{->}[d]^{\tau_{\mathbb{P}^1}}\\
&&&&&&\mathbb{P}^1.}
\end{equation} %
Here the morphism $\sigma$ is a blow up of the conic $C$,
the morphism $\rho$ is a blow up of the preimage of $C$ on $X$ (which is the singular locus of $X$),
the morphism $\widetilde{\tau}$ is a double cover branched over the proper transform of the surface $S$ on $\widetilde{\mathbb{P}}^3$,
the morphism $\widetilde{\sigma}$ is a blow up of the intersection curve $\widetilde{C}$ of the proper transforms
on $\widetilde{\mathbb{P}}^3$ of the irreducible components of $S$,
the morphism $\widetilde{\rho}$ is a blow up of the preimage of $\widetilde{C}$ on $\widetilde{X}$ (which is the singular locus of~$\widetilde{X}$),
the morphism $\widehat{\tau}$ is a double cover branched over the proper transform of the surface~$S$ on~$\widehat{\mathbb{P}}^3$,
the rational map $\psi$ is given by the pencil $\mathcal{P}$,
the morphism $\tau_{\mathbb{P}^1}$ is a double cover, and general fibers of $\phi$ and $\varphi$ are smooth quadric surfaces.
In particular, the threefold $X$ is rational.
\end{theorem}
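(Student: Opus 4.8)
The plan is to dispatch the linear-algebra assertions (the conic, the pencil, the tangency and the factorisation of invariant quartics) by representation theory of $\A_5$ on $I\oplus W_3$, and then to build the diagram~\eqref{equation:I+W3} so that rationality can be read off from the resulting quadric bundle. Concretely, I would first decompose the relevant symmetric powers. Write $x_0$ for a coordinate on the trivial summand $I$ and $q$ for the (unique up to scalar) $\A_5$-invariant quadratic form on $W_3$. Using that the icosahedral group has nonzero invariants on $W_3$ only in even degrees, one-dimensional in degrees $0$, $2$ and $4$, one finds that $\mathrm{Sym}^2(I\oplus W_3)$ contains exactly two copies of the trivial representation and $\mathrm{Sym}^4(I\oplus W_3)$ exactly three. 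Hence the invariant quadrics are spanned by $x_0^2$ and $q$ and form the pencil $\mathcal{P}$, while the invariant quartics are spanned by $x_0^4$, $x_0^2q$ and $q^2$. This last span is precisely $\mathrm{Sym}^2$ of the span of $x_0^2$ and $q$, so every invariant quartic, read as a binary quadratic form in $(x_0^2,q)$, factors over $\mathbb{C}$ into two invariant quadrics; reducedness then forces these factors to be distinct and each reduced (so neither is the double plane $\{x_0^2=0\}$), which is the asserted description of $S$. For the conic, the only $\A_5$-invariant plane is $\P(W_3)$, being the only three-dimensional subrepresentation of $I\oplus W_3$, and $\mathrm{Sym}^2(W_3)$ contains a one-dimensional space of invariants, yielding the unique invariant conic $C=\{x_0=q=0\}$ in that plane. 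Since any invariant conic spans an invariant plane and $\P^3$ contains no invariant line, $C$ is the only one.

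Next I would record the tangency. For two members $Q_i=\lambda_ix_0^2+\mu_iq$ with distinct $[\lambda_i:\mu_i]$, elimination gives $Q_1\cap Q_2=\{x_0^2=q=0\}$, supported on $C$; along $C$ both differentials reduce to multiples of $dq$, so $Q_1$ and $Q_2$ share a tangent plane at each point of $C$ and are tangent there. This also exhibits the two degenerate members of $\mathcal{P}$, the double plane $\{x_0^2=0\}$ and the cone $\{q=0\}$ over $C$, all others being smooth quadrics. Now fix a reduced invariant quartic $S=Q_1\cup Q_2$ and write $X=\{w^2=Q_1Q_2\}$. On $\{Q_2\neq0\}$ set $w_1=w/Q_2$; then $w_1^2=Q_1/Q_2$, so $w_1$ parametrises a $\P^1$ mapping $2:1$ to the base of $\mathcal{P}$, branched where $Q_1=0$ or $Q_2=0$ (this is $\tau_{\P^1}$), and the fibre of $\varphi$ over $w_1$ is the quadric surface $\{Q_1-w_1^2Q_2=0\}$.

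To turn this birational description into the morphisms of~\eqref{equation:I+W3}, I would resolve the pencil and the double cover together. The base scheme of $\mathcal{P}$ is $C$ with a doubled (non-reduced) structure, and $\mathrm{Sing}(X)=\tau^{-1}(C)$; blowing up $C$ (the map $\sigma$) and its preimage on $X$ (the map $\rho$) yields $\widetilde{\P}^3$ and $\widetilde X$, after which a local computation shows that the proper transforms $\widetilde Q_1,\widetilde Q_2$ meet transversally along a curve $\widetilde C$ over $C$, so the branch divisor is a normal crossing and $\widetilde X$ is singular along $\widetilde\tau^{-1}(\widetilde C)$. Blowing up $\widetilde C$ (the map $\widetilde\sigma$) separates the components into disjoint smooth surfaces $\widehat Q_1\sqcup\widehat Q_2$, so $\widehat\tau$ is branched over a smooth divisor and the companion blow-up $\widetilde\rho$ resolves $\widetilde X$ to a smooth $\widehat X$, while $\phi$ becomes a morphism whose general fibre is a smooth quadric surface and $\varphi$ the induced quadric bundle over the double cover $\P^1$.

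Finally, rationality is immediate from this model: the conic $C$ lies in the base locus of $\mathcal{P}$, hence in every member, so the generic fibre of $\varphi$ is a smooth quadric surface over $K=\mathbb{C}(\P^1)$ containing the rational curve $C$; projecting from a smooth point of $C$ shows that this quadric is $K$-rational, whence $\mathbb{C}(\widehat X)=\mathbb{C}(X)$ is purely transcendental of degree three over $\mathbb{C}$ and $X$ is rational. I expect the main obstacle to be the two local blow-up computations: checking that blowing up $C$ exactly resolves $\mathrm{Sing}(X)$ while converting the tangency of $Q_1$ and $Q_2$ into a transverse crossing along $\widetilde C$, and that the second blow-up separates the components into a smooth branch divisor, so that $\widehat X$ is smooth and $\varphi$ is a genuine quadric bundle; granting this, the rationality conclusion follows at once.
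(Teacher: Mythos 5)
The paper offers no proof of this theorem beyond ``Straightforward and left to the reader,'' and your argument is a correct implementation of the evident intended strategy: the invariant-theoretic identification of the pencil $\langle x_0^2,q\rangle$ and the factorisation of invariant quartics, followed by the resolution of the tangency along $C$ by two blow-ups and the Stein factorisation into a quadric bundle over the double cover of the pencil's base. The only cosmetic imprecisions are that the generic fibre of $\varphi$ contains $C$ only in the birational model (after the blow-ups it contains instead a $K$-rational curve coming from the exceptional divisor, or one simply invokes that the quadric $\{Q_1=w_1^2Q_2\}\subset\mathbb{P}^3_{K}$ contains $C_K$), and that when one component of $S$ is the cone $\{q=0\}$ the threefold $\widehat{X}$ acquires a node over the vertex, so it need not be smooth --- neither point affects the diagram or the rationality conclusion.
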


\begin{proof}
Straightforward and left to the reader.
\end{proof}

We will deal with  the remaining case $\mathbb{P}^3=\mathbb{P}(W_4)$ in the next section.

\section{Hashimoto's pencil of quartic surfaces}
\label{section:Hashimoto}

Recall that $W_4$ denotes the irreducible four-dimensional representation of $\mathfrak{A}_5$.
Put~\mbox{$\mathbb{P}^3=\mathbb{P}(W_4)$}.
Note that the sum of $W_4$ and the trivial $\mathfrak{A}_5$-representation $I$
is the usual five-dimensional permutation representation of $\mathfrak{A}_5$.
Let $x_0,\ldots,x_4$ be the coordinates in $I\oplus W_4$ permuted by $\mathfrak{A}_5$.
Then $W_4$ is given in $I\oplus W_4$ by the equation
$$
x_0+x_1+x_2+x_3+x_4=0.
$$
Thus, we can consider $x_0,\ldots,x_4$ as homogenous coordinates on $\mathbb{P}^3\cong\mathbb{P}(W_4)$,
keeping in mind that $x_0=-(x_1+\ldots+x_4)$.

For every positive $i$, put
$$
F_i=x_0^i+x_1^i+x_2^i+x_3^i+x_4^i=\Big(-(x_1+\ldots+x_4)\Big)^i+x_1^i+x_2^i+x_3^i+x_4^i.
$$
Then $F_2$ is the unique $\mathfrak{A}_5$-invariant polynomial of degree two in $x_1,x_2,x_3,x_4$ modulo scaling.
Similarly, modulo scaling, $F_3$ is the unique $\mathfrak{A}_5$-invariant polynomial of degree three in $x_1,x_2,x_3,x_4$.
Finally, modulo $F_2$ and scaling, $F_4$ is the unique $\mathfrak{A}_5$-invariant polynomial of degree four in $x_1,x_2,x_3,x_4$.
Thus, any $\mathfrak{A}_5$-invariant quartic surface in $\mathbb{P}^3$ is given by
\begin{equation}
\label{equation:quartic}
F_4=\lambda F_2^2
\end{equation}
for some $\lambda\in\mathbb{C}$.

\begin{remark}
\label{remak:Hashimoto}
The quartic surfaces given by \eqref{equation:quartic} have been studied by Kenji Hashimoto in \cite{Hashimoto11}.
In particular, he described all singular surfaces in this pencil.
\end{remark}

The lengths of $\mathfrak{A}_5$-orbits in $\mathbb{P}^3$ are $5$, $10$, $12$, $15$, $20$, $30$, and $60$
(see, for example,~\mbox{\cite[Corollary~5.2.3]{CheltsovShramov}}).
Let $\Sigma_{5}$, $\Sigma_{10}$, $\Sigma_{10}^\prime$, $\Sigma_{15}$ be the $\mathfrak{A}_5$-orbits
of the points
$$
[-4:1:1:1:1],\ [0:0:0:-1:1],\ [-2:-2:-2:3:3],\ [0:-1:-1:1:1],
$$
respectively. Then $|\Sigma_{5}|=5$, $|\Sigma_{10}|=|\Sigma_{10}^\prime|=10$, and $|\Sigma_{15}|=15$.

\begin{remark}
\label{remark:10-points}
The points $[0:0:0:-1:1]$, $[0:0:-1:0:1]$, and $[0:0:-1:1:0]$ are collinear.
Thus, there are ten lines in $\mathbb{P}^3$ such that each of them contains three points of $\Sigma_{10}$,
and each point of $\Sigma_{10}$ lies on exactly three of these lines.
Similarly, the points $[-2:-2:-2:3:3]$, $[-2:-2:3:-2:3]$, $[-2:-2:3:3:-2]$, and $[3:3:-2:-2:-2]$ are coplanar.
Hence, there are ten planes in $\mathbb{P}^3$ such that each of them contains four points of $\Sigma_{10}^\prime$,
and each point of $\Sigma_{10}^\prime$ lies on exactly four of these planes.
In particular, for each $\mathfrak{A}_5$-orbit $\Sigma\in\{\Sigma_{10}, \Sigma_{10}^\prime\}$,
there exists a plane in $\mathbb{P}^3$ that contains at least four points of $\Sigma$
such that no three of them are collinear.
\end{remark}

The $\mathfrak{A}_5$-orbits $\Sigma_{5}$, $\Sigma_{10}$, $\Sigma_{10}^\prime$, and $\Sigma_{15}$
are the only $\mathfrak{A}_5$-orbits of lengths $5$, $10$, and $15$ in~$\mathbb{P}^3$.
Moreover, there are exactly two $\mathfrak{A}_5$-orbits $\Sigma_{12}$ and $\Sigma_{12}^\prime$ in $\mathbb{P}^3$ of length $12$,
see, for example, \cite[Corollary~5.2.3]{CheltsovShramov} and \cite[Lemma~5.3.3]{CheltsovShramov}.

By \cite[Lemma~5.3.3(xi)]{CheltsovShramov}, the curve in $\mathbb{P}^3$ that is given by $F_2=F_4=0$  is a smooth curve of genus nine.
In particular, this curve does not contain the $\mathfrak{A}_5$-orbits $\Sigma_{5}$, $\Sigma_{10}$, $\Sigma_{10}^\prime$,
and $\Sigma_{15}$, see, for example, \cite[Lemma~5.1.4]{CheltsovShramov}.
Therefore, there exists a unique $\mathfrak{A}_5$-invariant quartic surface $S_5$ (respectively, $S_{10}$, $S_{10}^\prime$, $S_{15}$) in $\mathbb{P}^3$
that contains the $\mathfrak{A}_5$-orbit~$\Sigma_{5}$ (respectively, $\Sigma_{10}$, $\Sigma_{10}^\prime$, $\Sigma_{15}$).

\begin{proposition}[{\cite[Proposition~3.1]{Hashimoto11}}]
\label{proposition:Hashimoto}
The surface $S_5$ (respectively, $S_{10}$, $S_{10}^\prime$, $S_{15}$) is given by the equation \eqref{equation:quartic}
with $\lambda=\frac{13}{20}$ (respectively, $\lambda=\frac{1}{2}$, $\lambda=\frac{7}{30}$, $\lambda=\frac{1}{4}$).
The surface~$S_5$ (respectively, $S_{10}$, $S_{10}^\prime$, $S_{15}$) has nodes
at the points of the $\mathfrak{A}_5$-orbit $\Sigma_{5}$ (respectively, $\Sigma_{10}$, $\Sigma_{10}^\prime$, $\Sigma_{15}$), and is smooth outside of it.
Moreover, $S_5$, $S_{10}$, $S_{10}^\prime$, and $S_{15}$ are the only singular surfaces given by  \eqref{equation:quartic}.
\end{proposition}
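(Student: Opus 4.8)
The plan is to find the singular points of an arbitrary member of the pencil by a single computation and then read off which $\lambda$ admit singular surfaces. Work in the coordinates $x_1,x_2,x_3,x_4$ on $\P^3=\P(W_4)$ with $x_0=-(x_1+\cdots+x_4)$, and set $G=F_4-\lambda F_2^2$. Using $\partial F_k/\partial x_i=k(x_i^{k-1}-x_0^{k-1})$, the equations $\partial G/\partial x_i=0$ for $i=1,\dots,4$ simplify to
$$x_i^3-\lambda F_2 x_i=x_0^3-\lambda F_2 x_0\qquad(i=1,\dots,4),$$
so that at a singular point all five coordinates $x_0,\dots,x_4$ are roots of one depressed cubic $h(t)=t^3-\lambda F_2 t-\nu$. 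Consequently the coordinates take at most three distinct values, and, since $h$ has no quadratic term, the sum of the distinct values is zero.

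The second step is a short combinatorial classification. Partitioning $\{0,\dots,4\}$ according to equal coordinate values, and using $x_0+\cdots+x_4=0$ together with the vanishing of the sum of distinct roots, the only possibilities are the four partition shapes $(4,1)$, $(3,2)$, $(3,1,1)$, and $(2,2,1)$; the one-value case is excluded, and in each three-value case one value is forced to be $0$. These shapes correspond precisely to the orbits $\Sigma_5$, $\Sigma_{10}^\prime$, $\Sigma_{10}$, and $\Sigma_{15}$, respectively. For each shape the gradient equations pin down $\lambda$ uniquely: evaluating $r_1^2+r_1r_2+r_2^2=\lambda F_2$ in the two-value cases, and the analogous relations $b^2=\lambda F_2$ and $a^2=\lambda F_2$ in the three-value cases, gives $\lambda=\tfrac{13}{20},\tfrac{7}{30},\tfrac12,\tfrac14$. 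Since these four numbers are distinct, any singular member of the pencil has one of these four values of $\lambda$, which already yields the final sentence of the proposition. I would record the $\lambda$-values independently by plugging the orbit representatives into $\lambda=F_4/F_2^2$: at $[-4:1:1:1:1]$ one gets $F_2=20$, $F_4=260$, hence $\lambda=\tfrac{13}{20}$, and similarly for the others; this confirms that the chosen $\lambda$ places the relevant orbit on the surface.

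It remains to check that, at each of the four values, the surface is singular exactly along the corresponding orbit, with ordinary nodes. Smoothness away from the orbit is immediate: at $\lambda=\tfrac{13}{20}$ the gradient can vanish only at points of shape $(4,1)$, i.e. on $\Sigma_5$, and analogously for the other three values; conversely the orbit does lie on the surface by the computation above, so the surface is genuinely singular there. To identify the singularity type I would fix the representative $p=[-4:1:1:1:1]$, whose stabiliser in $\A_5$ is $\A_4$, pass to affine coordinates on $\P^3$ centred at $p$, and compute the Hessian of $G$; a direct evaluation shows it has rank three, so $p$ is an $A_1$ point, and $\A_5$-equivariance spreads this conclusion over the whole orbit. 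The same local computation is carried out at representatives of $\Sigma_{10}$, $\Sigma_{10}^\prime$, and $\Sigma_{15}$.

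The essential idea, and the step that makes everything else routine, is the reduction of the singularity equations to a single depressed cubic; the only real care needed there is to work in the hyperplane $\{x_0+\cdots+x_4=0\}$ so that the five partial derivatives collapse to the four independent ones. After that the classification is purely combinatorial. The main remaining labour is the local Hessian computation verifying that each singular point is an ordinary node; this is a finite, explicit calculation at one representative per orbit and presents no conceptual difficulty, though it is where one must be careful not to misidentify an $A_1$ point with a worse singularity.
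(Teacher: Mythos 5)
Your argument is essentially correct, but it is worth noting that the paper does not prove this proposition at all: it is imported verbatim from Hashimoto's work (\cite[Proposition~3.1]{Hashimoto11}), so your proposal supplies an independent, self-contained verification rather than a variant of an argument in the text. The reduction of the four gradient equations to the statement that all five coordinates are roots of a single depressed cubic $t^3-\lambda F_2(x)\,t-\nu$ is the right key idea, and the resulting case analysis by partition shape, the computation of $\lambda=F_4/F_2^2$ at the orbit representatives, and the observation that the four values of $\lambda$ are pairwise distinct do yield the full statement (including the final ``only singular members'' clause, and Euler's formula guarantees that a point where the gradient vanishes automatically lies on the surface). Two small caveats. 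First, your blanket assertion that ``the sum of the distinct values is zero'' is false in the two-value cases: only the sum of the three roots of the cubic counted with multiplicity vanishes, and in shapes $(4,1)$ and $(3,2)$ the third root need not occur among the coordinates (had you actually imposed that condition there, it would wrongly eliminate both shapes). You in fact use the condition only in the three-value cases, where it is legitimate because all three roots appear --- equivalently, because $a^2+ab+b^2=b^2+bc+c^2$ forces $(a-c)(a+b+c)=0$ --- so the slip is harmless, but the statement should be corrected. Second, the identification of each singular point as an ordinary node rests on the Hessian computations you defer; these are finite and routine (for instance, the paper's proof of Theorem~\ref{theorem:X15} exhibits the rank-three quadratic part $4y_3^2-16y_1y_2$ at a point of $\Sigma_{15}$), but they must actually be carried out at one representative per orbit for the proof to be complete. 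What your approach buys is transparency and independence from \cite{Hashimoto11}; what the citation buys the authors is brevity and the additional structural information Hashimoto establishes about the pencil.
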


Denote by $X_5$ (respectively, $X_{10}$, $X_{10}^\prime$, $X_{15}$) the double cover of $\mathbb{P}^3$ branched over
the surface $S_5$ (respectively, $S_{10}$, $S_{10}^\prime$, $S_{15}$).
As we already mentioned in Remark~\ref{remark:lifting-shifting}, the action of the group $\mathfrak{A}_5$ lifts to each of these threefolds.
In the next two theorems, we prove two guesses that were formulated in \cite[Example~1.3.9]{CheltsovShramov}

\begin{theorem}
\label{theorem:X5}
The threefold $X_5$ is $\mathbb{Q}$-factorial and non-rational.
Moreover, twisted cubics passing through $\Sigma_5$ define a commutative diagram
$$
\xymatrix{
&V^\prime\ar@{-->}[rr]^{\rho^\prime}\ar@{->}[ld]_{\pi^\prime}\ar@{->}[rd]_{\zeta^\prime}&&W^\prime\ar@{->}[ld]^{\xi^\prime}\ar@{->}[rdd]^{\phi^\prime}&\\%
X_5\ar@{->}[d]_{\tau}&&V_{3}^\prime\ar@{->}[d]_{\iota}&&\\
\mathbb{P}^3&&V_3&&Y_5\\
&V\ar@{-->}[rr]_{\rho}\ar@{->}[lu]^{\pi}\ar@{->}[ru]^{\zeta}&&W\ar@{->}[lu]_{\xi}\ar@{->}[ru]_{\phi}.&}
$$ %
Here the morphism $\tau$ is a double cover branched over the surface $S_5$,
the morphism $\pi$ is the blow-up of $\Sigma_5$, the morphism $\pi^\prime$ is the blow-up of the singular locus of $X_5$,
the morphism $\zeta$ is the contraction of the proper transforms of the ten lines
$L_1,\ldots,L_{10}$ in $\mathbb{P}^3$ passing through pairs of points in $\Sigma_5$,
the rational map $\rho$ is a composition of Atiyah flops in these ten curves,
the morphism $\zeta^\prime$ is the contraction of the proper transforms of the twenty curves
that are mapped by $\tau$ to the lines $L_1,\ldots,L_{10}$,
the rational map $\rho^\prime$ is a composition of Atiyah flops in these twenty curves,
the variety $V_3$ is the Segre cubic hypersurface in $\mathbb{P}^4$,
the variety $V_3^\prime$ is a complete intersection of the cone over $V_3$ and a quadric hypersurface in~$\mathbb{P}^5$,
the morphism $\iota$ is the natural double cover given by the projection of this cone from its vertex,
both morphisms $\phi$ and $\phi^\prime$ are conic bundles, and $Y_5$ is a del Pezzo surface of degree $5$.
In particular, the $\mathfrak{A}_5$-Mori fiber space $\phi^\prime\colon W^\prime\to Y_5$ is $\mathfrak{A}_5$-birational to~$X_{5}$.
\end{theorem}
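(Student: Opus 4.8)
The plan is to establish the three assertions---$\mathbb{Q}$-factoriality, non-rationality, and the birational diagram---in turn, deducing the first two from the prequel \cite{CheltsovPrzyjalkowskiShramov} and building the diagram by hand from the classical geometry of the Segre cubic. By Proposition~\ref{proposition:Hashimoto} the branch surface $S_5$ has ordinary double points exactly at the five points of $\Sigma_5$ and is smooth elsewhere, so $X_5$ is a nodal quartic double solid with precisely five nodes. I would first observe that $\Sigma_5$ consists of five points in general linear position in $\mathbb{P}^3$: the coordinate vectors of $[-4:1:1:1:1]$ and its $\mathfrak{A}_5$-translates sum to zero and satisfy no further linear relation, so any four of them are linearly independent. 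The $\mathbb{Q}$-factoriality criterion for nodal quartic double solids from \cite{CheltsovPrzyjalkowskiShramov} then reduces the question to the statement that these five nodes impose independent conditions on the relevant linear system of surfaces, which is immediate for points in general linear position; hence the defect of $X_5$ vanishes and $X_5$ is $\mathbb{Q}$-factorial. With $\mathbb{Q}$-factoriality in hand, non-rationality follows by invoking the non-rationality theorem of \cite{CheltsovPrzyjalkowskiShramov} for $\mathbb{Q}$-factorial nodal quartic double solids with few nodes, five being well within the admissible range.

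To construct the diagram I would build the lower row first, where no double cover is present. Blowing up the five points gives $\pi\colon V\to\mathbb{P}^3$, and since $\Sigma_5$ is in general position the linear system $|\mathcal{I}_{\Sigma_5}(2)|$ of quadrics through $\Sigma_5$ has projective dimension four and defines the classical rational map $\mathbb{P}^3\dashrightarrow\mathbb{P}^4$ whose image is the Segre cubic $V_3$. On $V$ this map becomes the small contraction $\zeta\colon V\to V_3$ collapsing the strict transforms of the ten lines $L_1,\ldots,L_{10}$ through pairs of points of $\Sigma_5$ onto the ten nodes of $V_3$; each such strict transform is a $(-1,-1)$-curve. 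Performing the Atiyah flops in these ten curves produces $\rho\colon V\dashrightarrow W$ together with the second small resolution $\xi\colon W\to V_3$, and a standard conic bundle structure on this small resolution of the Segre cubic supplies $\phi\colon W\to Y_5$ over the del Pezzo surface $Y_5$ of degree five, on which the residual symmetry $\mathfrak{A}_5\subset\mathfrak{S}_5=\mathrm{Aut}(Y_5)$ acts.

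Next I would lift everything to the double cover. The morphism $\pi'\colon V'\to X_5$ is the blow up of the five nodes, which resolves $X_5$. Each line $L_i$ passes through two nodes of $S_5$ and meets $S_5$ with multiplicity two at each of them, so the branch divisor $S_5|_{L_i}$ is non-reduced and the preimage $\tau^{-1}(L_i)$ splits into two rational curves; their strict transforms on $V'$ give the twenty curves contracted by $\zeta'\colon V'\to V_3'$, where $V_3'$ is the announced complete intersection of the cone over $V_3$ with a quadric and $\iota\colon V_3'\to V_3$ is the induced double cover obtained by projecting the cone from its vertex. Flopping these twenty curves yields $\rho'$, the small resolution $\xi'\colon W'\to V_3'$, and the conic bundle $\phi'\colon W'\to Y_5$ over the same base. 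Commutativity of each square is checked directly from the definitions of the maps. Since the composite $X_5\xleftarrow{\pi'}V'\dashrightarrow W'$ is an $\mathfrak{A}_5$-equivariant birational map---a blow up followed by flops, all $\mathfrak{A}_5$-equivariant---the $\mathfrak{A}_5$-Mori fiber space $\phi'\colon W'\to Y_5$ is $\mathfrak{A}_5$-birational to $X_5$.

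The principal difficulty lies in the detailed analysis of the double cover over the ten lines: one must verify that each $\tau^{-1}(L_i)$ really splits into two smooth rational curves meeting the nodes in the expected way, that their strict transforms on $V'$ are $(-1,-1)$-curves so that genuine Atiyah flops are available, and---most delicate---that the resulting fibration $\phi'$ is a flat conic bundle over precisely the quintic del Pezzo surface $Y_5$ rather than over some other rational surface. Identifying $V_3'$ with the stated complete intersection and matching the bases of the two conic bundles $\phi$ and $\phi'$ is the technical heart of the argument.
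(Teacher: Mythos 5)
Your route is genuinely different from the paper's. The paper disposes of the whole theorem in three lines: it quotes \cite[Theorems~1.8 and 1.2]{CheltsovPrzyjalkowskiShramov} for $\mathbb{Q}$-factoriality and non-rationality, and then invokes \cite[Proposition~4.7]{Pr10}, which already contains the entire Sarkisov link for a quartic double solid with five nodes in general position; the only thing verified by hand is the single hypothesis of that proposition, namely that the preimage on $X_5$ of a general twisted cubic through $\Sigma_5$ is an irreducible (singular) rational curve. You instead rebuild the link explicitly from the classical geometry of the Segre cubic, which is more self-contained and makes the $\mathfrak{A}_5$-equivariance visible at every step. The pieces you do carry out are sound: $\Sigma_5$ is in general linear position; the defect criterion of Clemens--Endra\ss{} (the relevant linear system for a \emph{quartic} double solid is that of \emph{quadrics} through the nodes, exactly as used for $X_{10}$ in the paper) gives $\mathbb{Q}$-factoriality since five general points impose independent conditions on quadrics; $\tau^{-1}(L_i)$ splits because $S_5\vert_{L_i}=2P+2Q$; and $V_3^\prime$ is the stated complete intersection because $F_4-\lambda F_2^2$, being singular along $\Sigma_5$, is a quadratic expression in the five quadrics through $\Sigma_5$.

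The gap is precisely the step you flag as ``the technical heart'' and then do not carry out: why is $\phi^\prime\colon W^\prime\to Y_5$ a conic bundle over $Y_5$ itself, rather than factoring through a nontrivial double cover of $Y_5$ in its Stein factorization? This is decided by the irreducibility of the general fiber of the composite rational map $X_5\dashrightarrow Y_5$, i.e.\ of $\tau^{-1}(C)$ for a general twisted cubic $C$ through $\Sigma_5$ --- and this is exactly the one computation the paper's proof records. It is short: $C\cdot S_5=12$, and $C$ passes through the five nodes of $S_5$, absorbing intersection multiplicity $2$ at each, so $C$ meets $S_5$ transversally at exactly two further points; hence the normalization of $\tau^{-1}(C)$ is a double cover of $\mathbb{P}^1$ with nonempty (two-point) branch locus, which is connected, so $\tau^{-1}(C)$ is an irreducible rational curve with nodes over $\Sigma_5$. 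Without this check your construction only yields a fibration over some surface mapping generically finitely to $Y_5$; with it, your argument closes and agrees with the paper's.
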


\begin{proof}
The threefold $X_5$ is $\mathbb{Q}$-factorial by \cite[Theorem 1.8]{CheltsovPrzyjalkowskiShramov},
and it is non-rational by \mbox{\cite[Theorem~1.2]{CheltsovPrzyjalkowskiShramov}}.
The preimage on $X_5$ of a general twisted cubic that passes through $\Sigma_5$ is an irreducible (singular) rational curve.
Thus, the existence of the commutative diagram follows from \cite[Proposition~4.7]{Pr10}.
\end{proof}

In the proof of \cite[Theorem~4.2]{CheltsovPrzyjalkowskiShramov},
the authors constructed a birational transformation of $X_5$ into a standard conic bundle over the smooth del Pezzo surface $Y_5$ of degree $5$.
Unlike the birational map $\rho^\prime\circ(\pi^\prime)^{-1}$ from Theorem~\ref{theorem:X5},
this transformation is not $\mathfrak{A}_5$-equivariant (it is only $\mathfrak{A}_4$-equivariant).

\begin{theorem}
\label{theorem:X10}
The threefolds $X_{10}$ and $X_{10}^\prime$ are $\mathbb{Q}$-factorial, and $X_{10}$ is not stably rational.
\end{theorem}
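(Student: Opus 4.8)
The plan is to establish the two assertions by separate methods. For the $\Q$-factoriality of $X_{10}$ and $X_{10}^\prime$ I would reuse the criterion \cite[Theorem~1.8]{CheltsovPrzyjalkowskiShramov} that already settled $X_5$: the defect of a nodal quartic double solid is the dimension of the cokernel of the evaluation map $H^0(\P^3,\mathcal{O}_{\P^3}(2))\to\bigoplus_{P}\mathbb{C}$ at its nodes, so the solid is $\Q$-factorial exactly when its ten nodes impose independent conditions on quadrics; since $h^0(\P^3,\mathcal{O}_{\P^3}(2))=10$ equals the number of nodes, this is equivalent to no quadric of $\P^3$ passing through all of them. Quadrics through $\Sigma_{10}$ form an $\A_5$-subrepresentation of $\mathrm{Sym}^2(W_4)=I\oplus W_4\oplus W_5$, where $W_5$ is the five-dimensional irreducible representation of $\A_5$; the invariant quadric $F_2=\sum x_i^2$ takes the value $2$ at $[0:0:0:-1:1]\in\Sigma_{10}$, which excludes the summand $I$, and a direct evaluation of generators of the $W_4$- and $W_5$-isotypic parts at the orbit excludes the remaining two. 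The identical computation (now with $F_2=30$ at $[-2:-2:-2:3:3]$) disposes of $\Sigma_{10}^\prime$.

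For the non-stable rationality of $X_{10}$ the conclusion is strictly stronger than the non-rationality of Theorem~\ref{theorem:X5}, so the Clemens-type intermediate Jacobian argument is insufficient and a stable-birational invariant is needed. The decisive point is that the member $S_{10}$ (the value $\lambda=\frac12$) is a \emph{quartic symmetroid}: I would show that its equation $F_4=\frac12F_2^2$ is the determinant of a symmetric $4\times4$ matrix $M$ of linear forms, the ten nodes $\Sigma_{10}$ being precisely the corank-two locus of $M$. Constructing $M$ should be forced by equivariance, since the action of $2.\A_5$ on $W_4$ leaves little freedom for a symmetric determinantal presentation. In this way $X_{10}$ is identified with an Artin--Mumford threefold.

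Granting the symmetroid structure, a resolution $V\to X_{10}$ (obtained by blowing up the ten nodes, each replaced by a smooth quadric surface) carries the Artin--Mumford obstruction $\mathrm{Tors}\,H^3(V,\mathbb{Z})\cong\mathbb{Z}/2\mathbb{Z}$, equivalently a nontrivial class in the unramified Brauer group $\mathrm{Br}(V)$. As this group is a stable-birational invariant that vanishes on stably rational varieties, $V$, and hence $X_{10}$, is not stably rational. For the rigorous implication I would quote the non-stable-rationality theorem of \cite{CheltsovPrzyjalkowskiShramov} for symmetroid-branched quartic double solids, so that the only new input required here is the symmetroid identification.

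I expect that identification to be the main obstacle: exhibiting the symmetric determinantal representation of $S_{10}$ explicitly (or, equivalently, producing the two-torsion class and checking it survives on a resolution) is where the special arithmetic of the orbit $\Sigma_{10}$ and of $\lambda=\frac12$ enters. It is also exactly what separates $X_{10}$ from $X_{10}^\prime$: although $S_{10}^\prime$ likewise has ten nodes, it is not a symmetroid, so no Artin--Mumford class is available and only $\Q$-factoriality is asserted for $X_{10}^\prime$. Finally, there is no conflict between the two halves of the proof: the ten nodes impose independent conditions on quadrics (hence $\Q$-factoriality), while the torsion responsible for non-stable rationality lives on the threefold and not on the branch surface, as $\Sigma_{10}$ is not an even set of nodes of $S_{10}$ (even sets on quartic surfaces having cardinality $8$ or $16$).
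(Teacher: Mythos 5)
Your treatment of $\mathbb{Q}$-factoriality is essentially correct and follows the same criterion as the paper: both arguments reduce to showing that no quadric in $\mathbb{P}^3$ contains $\Sigma_{10}$ (respectively $\Sigma_{10}^\prime$). Your plan of evaluating the isotypic pieces of $\mathrm{Sym}^2(W_4)=I\oplus W_4\oplus W_5$ on the orbit does work, precisely because this decomposition is multiplicity-free, so the kernel of the equivariant evaluation map must be a sum of entire isotypic summands; you should say this explicitly. The paper avoids the evaluation altogether by a softer argument: the base locus of the quadrics through $\Sigma_{10}$ would have to be either an invariant set of at most eight points or an invariant curve of degree four, hence a complete intersection of two quadrics, which is impossible since $\mathrm{Sym}^2(W_4)$ contains no two-dimensional subrepresentation. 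Either route is acceptable.

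The second half, however, has a genuine gap. You reduce non-stable rationality of $X_{10}$ to the claim that $S_{10}$ (the member with $\lambda=\tfrac12$) is a quartic symmetroid whose corank-two locus is $\Sigma_{10}$, so that $X_{10}$ is an Artin--Mumford threefold; but you never exhibit the symmetric $4\times 4$ matrix of linear forms, and ``forced by equivariance'' is not an argument --- an equivariant symmetric determinantal representation amounts to an $\mathfrak{A}_5$-structure on a web of quadrics with prescribed discriminant, and nothing you write guarantees its existence. As you yourself concede, this identification is ``the main obstacle,'' and it is exactly the step that separates $X_{10}$ from the equally ten-nodal, equally $\mathbb{Q}$-factorial $X_{10}^\prime$, for which the paper shows in Remark~\ref{remark:X-10-prime} that the torsion vanishes (yielding, per Remark~\ref{remark:Endrass}, a counterexample to Endra\ss's claim that every $\mathbb{Q}$-factorial ten-nodal quartic double solid carries such torsion --- a claim your proposal implicitly leans on when it treats ``ten nodes imposing independent conditions'' and ``Artin--Mumford class'' as compatible by default). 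There is also no theorem in \cite{CheltsovPrzyjalkowskiShramov} of the ready-made form ``symmetroid-branched quartic double solids are not stably rational'' that you could simply quote. The paper's actual route supplies the missing verification in a different, fully explicit way: it projects from a node $O\in\Sigma_{10}$ to obtain a conic bundle over $\mathbb{P}^2$, computes the degeneration curve as a union of a line, a smooth conic and a smooth cubic with the conic and cubic tangent at three points, passes to a standard model via \cite[Theorem~4.2]{CheltsovPrzyjalkowskiShramov}, and observes that the resulting discriminant curve is disconnected; \cite[Theorem~2]{Zagorski} then produces the $2$-torsion in $H^3(V,\mathbb{Z})$ and \cite[Proposition~1]{ArtinMumford} concludes. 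To salvage your approach you would have to actually construct the determinantal representation (or otherwise exhibit the two-torsion class); the conic-bundle computation is the paper's way of doing precisely that.
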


\begin{proof}
The $\mathbb{Q}$-factoriality of $X_{10}$ (respectively,  $X_{10}^\prime$) is equivalent to the fact
that  $\Sigma_{10}$  (respectively,  $\Sigma_{10}^\prime$) is not contained in a quadric surface in $\mathbb{P}^3$,
see \cite[\S3]{Clemens} or \cite{En99}.
The latter condition is very easy to check by solving the system of linear equations.
However, it can also be easily proved without any computations.
Indeed, suppose that there are quadrics in $\mathbb{P}^3$
passing through $\Sigma_{10}$.
Note that $\Sigma_{10}$  does not lie on the quadric
given by $F_2=0$ by \cite[Lemma~5.3.3(vi)]{CheltsovShramov}.
The latter equation defines the unique $\mathfrak{A}_5$-invariant
quadric in $\mathbb{P}^3$.
Thus, the intersection of all
quadrics passing through $\Sigma_{10}$ is either an
$\mathfrak{A}_5$-invariant set of at most eight points, or an
$\mathfrak{A}_5$-invariant curve $Z$ of degree at most four.
The former case is clearly impossible because the set $\Sigma_{10}$
contains more than eight points. In the latter case one has
$\deg(Z)=4$, because there are no $\mathfrak{A}_5$-invariant
curves of degree at most three in $\mathbb{P}^3$ by
\cite[Lemma~5.3.3(ix)]{CheltsovShramov}. Thus, $Z$ is a complete intersection
of two quadrics. On the other hand, a direct computation shows that
$\mathrm{Sym}^2(W_4)$ does not contain two-dimensional
subrepresentations of $\mathfrak{A}_5$.
The obtained contradiction shows that
$\Sigma_{10}$ is not contained in any quadric surface
in $\mathbb{P}^3$.
Similarly, we see that $\Sigma_{10}^\prime$ is not contained in any quadric surface in $\mathbb{P}^3$.
Hence, both threefolds $X_{10}$ and $X_{10}^\prime$ are $\mathbb{Q}$-factorial.

Let us show that $X_{10}$ is not stably rational.
To do this, pick a point $O\in\Sigma_{10}$,
denote by $\tau$ the double cover $X_{10}\to\mathbb{P}^3$ that is branched at $S_{10}$.
Then there exists a commutative diagram
$$
\xymatrix{
X_{10}\ar@{->}[rr]^{\tau}&&\mathbb{P}^3\ar@{-->}[d]^{p_{O}}\\
\widetilde{X}_{10}\ar@{->}[u]^{f}\ar@{->}[rr]_{\pi}&&\mathbb{P}^{2},}
$$
where the morphism $p_{O}$ is the linear projection from the point $O$,
the morphism $f$ is the blow up of the point in $X_{10}$ that is mapped to the point $O$ by the double cover $\tau$,
and $\pi$ is a conic bundle.

Let us describe the degeneration curve $C$ of the conic bundle $\pi$.
We may assume that
$$
O=[0:0:0:-1: 1].
$$
Plugging $x_0=-x_1-x_2-x_3-x_4$ into the equation \eqref{equation:quartic} with $\lambda=\frac{1}{2}$,
and considering the affine equation of $S_{10}$ in the chart $x_4\ne 0$ with the new coordinates
$$
y_1=\frac{x_1}{x_4},\quad y_2=\frac{x_2}{x_4},\quad  y_3=\frac{x_3}{x_4}-1,
$$
we see that $O=(0,0,0)$ in these coordinates, and the chart of the surface $S_{10}$ in $\mathbb{A}^3$ is given by the equation
\begin{equation}
\label{equation:affine}
(y_1+y_2+y_3+2)^4+y_1^4+y_2^4+(y_3+1)^4+1=\frac{1}{2}\left((y_1+y_2+y_3+2)^2+y_1^2+y_2^2+(y_3+1)^2+1\right)^2.
\end{equation}
Then every line $L$ in $\mathbb{A}^3$ passing through $O$ is given by
$$
(y_1,y_2,y_3)=t(z_1,z_2,z_3)
$$
for some $(z_1,z_2,z_3)\ne (0,0,0)$, where $t$ is a parameter.
Plugging this parametric equation into \eqref{equation:affine},
dividing the resulting equation by $t^2$, and taking the discriminant of the resulting quadratic equation,
we see that the equation of $C$ in $\mathbb{P}^2$ is
$$
z_3\left(z_3(z_1+z_2)+z_1^2+z_1z_2+z_2^2\right)(z_2z_3^2+z_1z_3^2+z_1^2z_3+5z_1z_2z_3+4z_1^2z_2+z_2^2z_3+4z_1z_2^2)=0,
$$
where we consider $z_1$, $z_2$, and $z_3$ as homogeneous coordinates on $\mathbb{P}^2$.
Thus $C$ is a union of a line $\ell$ that is given by $z_3=0$, a smooth conic $\gamma$ that is given by
$$
z_3(z_1+z_2)+z_1^2+z_1z_2+z_2^2=0,
$$
and a smooth cubic curve $\zeta$ that is given by
$$
z_2z_3^2+z_1z_3^2+z_1^2z_3+5z_1z_2z_3+4z_1^2z_2+z_2^2z_3+4z_1z_2^2=0.
$$
The line $\ell$ intersects the curves $\gamma$ and $\zeta$ transversally.
Moreover, the curves $\gamma$ and $\zeta$ are tangent at three points $[0:1:-1]$, $[1:0:-1]$, and $[0:0:1]$,
so that $\gamma\cup\zeta$ has three tacnodes at these points (cf. Remark~\ref{remark:10-points} and \cite[Proposition~3.2(i)]{CheltsovPrzyjalkowskiShramov}).
Furthermore, no point in $\Sigma_{10}$ is mapped to a point in $\ell\cap\gamma$ by $p_{O}$,
because all points in $\Sigma_{10}$ are defined over~$\mathbb{Q}$,
and the two points of the intersections $\ell\cap\gamma$ are $[-1+\sqrt{3}:2:0]$ and $[-1+\sqrt{3}:2:0]$.

By \cite[Theorem~4.2(i),(ii)]{CheltsovPrzyjalkowskiShramov}, there exists a commutative diagram
$$
\xymatrix{
V\ar@{->}[d]_{\nu}\ar@{-->}[rr]^{\rho}&&\widetilde{X}_{10}\ar@{->}[d]^{\pi}\\
U\ar@{->}[rr]^{\varrho}&&\mathbb{P}^2.}
$$
Here $V$ is a smooth projective threefold, $U$ is a smooth surface,
the relative Picard group of $V$ over $U$ has rank $1$, and
$\varrho$ is a birational morphism that factors as
$$
U\stackrel{\varrho_t'}\longrightarrow U_t\stackrel{\varrho_t}\longrightarrow U_n\stackrel{\varrho_n}\longrightarrow\P^2,
$$
where the morphism $\varrho_n$ is a blow up of the three points of $\zeta\cap\ell$,
the morphism $\varrho_t$ is a blow up of the three points of $\gamma\cap\zeta$,
and $\varrho_t^\prime$ is a blow up of the three intersection points of the
proper transforms of the curves $\gamma$ and $\zeta$ on the surface $U_t$.

Let $\Delta$ be the degeneration curve of the conic bundle $\nu$.
Then $\Delta$ is the proper transform of the curve $C$ by \cite[Theorem~4.2(iii)]{CheltsovPrzyjalkowskiShramov}.
Thus, the curve $\Delta$ is not connected, so that $H^3(V,\mathbb{Z})$ has non-trivial $2$-torsion by \cite[Theorem~2]{Zagorski},
see also \cite[Proposition~3]{ArtinMumford}.
So the threefold $X_{10}$ is not stably rational by \cite[Proposition~1]{ArtinMumford}.
\end{proof}

We do not know whether $X_{10}^\prime$ is rational or not.

\begin{remark}
\label{remark:X-10-prime}
The proof of non-rationality of $X_{10}$ is not applicable to $X_{10}^\prime$.
Indeed, arguing as in the proof of Theorem~\ref{theorem:X10}, we obtain a commutative diagram
$$
\xymatrix{
X_{10}^\prime\ar@{->}[rr]^{\tau}&&\mathbb{P}^3\ar@{-->}[d]^{p_{O}}\\
\widetilde{X}_{10}^\prime\ar@{->}[u]^{f}\ar@{->}[rr]_{\pi}&&\mathbb{P}^{2},}
$$
where the morphism $\tau$ is the double cover branched at $S_{10}^\prime$,
the rational map $p_{O}$ is the linear projection from the point
$$
O=[-2:-2:-2:3:3]\in \Sigma_{10}^\prime,
$$
the morphism $f$ is the blow up of the point in $X_{10}^\prime$ that is mapped to $O$,
and $\pi$ is a conic bundle. Denote by $C$ the degeneration curve of the conic bundle $\pi$.
Making computations similar to those in the proof of Theorem~\ref{theorem:X10}, we see that $C$ can be given by
\begin{multline*}
-16z_1^6-16z_2^6-13z_2^2z_3^4-13z_1^2z_3^4-42z_2^3z_3^3-61z_2^4z_3^2-\\
-42z_1^3z_3^3-61z_1^4z_3^2+12z_1^4z_2^2+104z_1^3z_2^3+12z_1^2z_2^4-48z_1^5z_2-48z_1^5z_3-48z_1z_2^5-\\
-48z_2^5z_3+93z_1^2z_2^2z_3^2-26z_1^3z_2z_3^2-12z_1^2z_2z_3^3-12z_1z_2^2z_3^3-72z_1^4z_2z_3+120z_1^2z_2^3z_3-\\
-72z_1z_2^4z_3-26z_1z_2^3z_3^2-10z_1z_2z_3^4+120z_1^3z_2^2z_3=0,
\end{multline*}
where $z_1$, $z_2$, and $z_3$ are homogeneous coordinates on $\mathbb{P}^2$.
One can check that $C$ is an irreducible nodal curve with exactly nine nodes.
It follows from~\mbox{\cite[Theorem~4.2(i),(ii)]{CheltsovPrzyjalkowskiShramov}} that there exists a commutative diagram
$$
\xymatrix{
V\ar@{->}[d]_{\nu}\ar@{-->}[rr]^{\rho}&&\widetilde{X}_{10}^\prime\ar@{->}[d]^{\pi}\\
U\ar@{->}[rr]^{\varrho}&&\mathbb{P}^2,}
$$
where $V$ is a smooth projective threefold, $U$ is a smooth surface,
the relative Picard group of $V$ over $U$ has rank $1$, and
$\varrho$ is a blow up of nine nodes of $C$.
Moreover, the degeneration curve of the conic bundle $\nu$ is the proper transform of the curve $C$
by \cite[Theorem~4.2(iii)]{CheltsovPrzyjalkowskiShramov}.
Thus, $\Delta$ is an irreducible smooth elliptic curve in $|-2K_U|$,
which implies that the group $H^3(V,\mathbb{Z})$ is trivial (see, for example, \cite[Theorem~2]{Zagorski}).
In particular, the intermediate Jacobian of $V$ is trivial,
and the approach of \cite{ArtinMumford} does not work in this case either.
Note that \cite[Conjecture~10.3]{Shokurov} predicts that $X_{10}^\prime$ is
non-rational.
\end{remark}

\begin{remark}
\label{remark:Endrass}
In \cite[p.~354]{En99}, it is claimed that the resolution of singularities of any \mbox{$\mathbb{Q}$-factorial}
nodal quartic double solid with ten nodes has non-trivial torsion in the third integral cohomology group.
The computations in Remark~\ref{remark:X-10-prime} show that $X_{10}^\prime$ is a counter-example to this claim.
\end{remark}

\begin{remark}
\label{remark:Zagorskii}
In the proof of Theorem~\ref{theorem:X10} and in Remark~\ref{remark:X-10-prime}, we refer to \cite[Theorem~2]{Zagorski}.
Note that the notation of this theorem is a bit non-standard.
Namely, the second summand on the right hand side in \cite[(13)]{Zagorski} is torsion free (see \cite[Lemma~5]{Zagorski} and~\mbox{\cite[Lemma~7]{Zagorski}} for a detailed computation).
\end{remark}

Recall that a normal variety $X$ with an action of the finite group $G$ is said to be $G\mathbb{Q}$-factorial if any $G$-invariant Weil divisor on
$X$ is a $\mathbb{Q}$-Cartier divisor.

\begin{theorem}
\label{theorem:X15}
The threefold $X_{15}$ is not $\mathbb{Q}$-factorial, it is $\mathfrak{A}_5\mathbb{Q}$-factorial, and it is rational.
\end{theorem}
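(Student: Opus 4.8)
The plan is to read off both non-$\mathbb{Q}$-factoriality and $\mathfrak{A}_5\mathbb{Q}$-factoriality from the position of the fifteen nodes $\Sigma_{15}=\mathrm{Sing}(S_{15})$ relative to quadrics, and to obtain rationality by projecting from one of these nodes. The key input is that, for a nodal quartic double solid, the defect is governed by the evaluation map on quadrics (see \cite[\S3]{Clemens}, \cite{En99}, and the prequel \cite{CheltsovPrzyjalkowskiShramov}): if
$$
\mathrm{ev}\colon H^0\big(\mathbb{P}^3,\mathcal{O}_{\mathbb{P}^3}(2)\big)\longrightarrow\bigoplus_{P\in\Sigma_{15}}\mathbb{C}_P
$$
denotes restriction of quadrics to the nodes, then the defect $\dim\big(\mathrm{Cl}(X_{15})\otimes\mathbb{Q}\big/\mathrm{Pic}(X_{15})\otimes\mathbb{Q}\big)$ equals $\dim\mathrm{coker}(\mathrm{ev})$. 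Since every object here is $\mathfrak{A}_5$-equivariant, the first step is to identify $\mathrm{ev}$ as a morphism of $\mathfrak{A}_5$-representations.

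I would then compute both sides. A character computation gives
$$
\mathrm{Sym}^2(W_4)=I\oplus W_4\oplus W_5,
$$
so the source is ten-dimensional with a unique invariant, the quadric $F_2$. The target is the permutation representation of $\mathfrak{A}_5$ on $\Sigma_{15}$; since the stabiliser of a point of $\Sigma_{15}$ is a subgroup isomorphic to $\mathbb{Z}/2\times\mathbb{Z}/2$, Frobenius reciprocity yields
$$
\mathbb{C}^{\Sigma_{15}}=\mathrm{Ind}_{\mathbb{Z}/2\times\mathbb{Z}/2}^{\mathfrak{A}_5}I=I\oplus W_4\oplus 2\,W_5 .
$$
Next I would check that $\mathrm{ev}$ is injective, i.e. that no quadric passes through $\Sigma_{15}$: the invariant $F_2$ does not vanish on $\Sigma_{15}$ (its value at $[0:-1:-1:1:1]$ equals $4$), and on the isotypic components $W_4$ and $W_5$ the map is nonzero, hence injective by Schur's lemma (alternatively one invokes \cite[Lemma~5.3.3]{CheltsovShramov}). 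Consequently $\mathrm{coker}(\mathrm{ev})\cong W_5$, the defect equals $15-10=5>0$, and $X_{15}$ is not $\mathbb{Q}$-factorial. For the second assertion, taking $\mathfrak{A}_5$-invariants is exact, so $X_{15}$ is $\mathfrak{A}_5\mathbb{Q}$-factorial if and only if $\big(\mathrm{coker}(\mathrm{ev})\big)^{\mathfrak{A}_5}=0$; as $W_5$ is a nontrivial irreducible representation, this invariant space vanishes, giving $\mathfrak{A}_5\mathbb{Q}$-factoriality.

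For rationality I would project from a node, which is also the main obstacle. Fixing $O\in\Sigma_{15}$ and blowing up the corresponding node of $X_{15}$, the composition with the linear projection $p_O\colon\mathbb{P}^3\dashrightarrow\mathbb{P}^2$ realises $X_{15}$ as a conic bundle over $\mathbb{P}^2$, exactly as in the proof of Theorem~\ref{theorem:X10}; by \cite[Theorem~4.2]{CheltsovPrzyjalkowskiShramov} this conic bundle has a standard model whose degeneration curve is the proper transform of a discriminant curve $C\subset\mathbb{P}^2$. I would compute $C$ explicitly by substituting $x_0=-x_1-x_2-x_3-x_4$ and $\lambda=\tfrac14$ into \eqref{equation:quartic}, passing to affine coordinates centred at $O$, and taking the discriminant of the branch equation restricted to the pencil of lines through $O$, as in \eqref{equation:affine}. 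The difficulty is the final step: to conclude rationality one must show that the resulting conic bundle is rational, which requires strictly more than the triviality or splitting of its intermediate Jacobian, as the discussion of $X_{10}^\prime$ in Remark~\ref{remark:X-10-prime} illustrates. I expect the highly symmetric position of the fifteen nodes to force $C$ to be very degenerate, so that the conic bundle acquires a rational multisection; producing such a section, or directly trivialising the conic bundle over a suitable open subset of $\mathbb{P}^2$, should yield an explicit birational equivalence between $X_{15}$ and $\mathbb{P}^3$ and complete the proof.
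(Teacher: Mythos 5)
Your treatment of the first two assertions is a genuinely different route from the paper's, and it is essentially sound. The paper obtains non-$\mathbb{Q}$-factoriality by citing \cite[Corollary~1.7]{CheltsovPrzyjalkowskiShramov}, and proves $\mathfrak{A}_5\mathbb{Q}$-factoriality by a purely \emph{local} computation: at $O=[0:-1:-1:1:1]$, whose stabiliser is $\Gamma\cong(\mathbb{Z}/2\mathbb{Z})^2$, it writes the tangent cone to $X_{15}$ as a cone over a quadric $B$ and checks that an involution $\sigma\in\Gamma$ interchanges the two rulings of $B$, so the $\Gamma$-invariant local class group at the node is trivial. Your global argument via the defect is attractive, your decompositions $\mathrm{Sym}^2(W_4)=I\oplus W_4\oplus W_5$ and $\mathbb{C}^{\Sigma_{15}}=I\oplus W_4\oplus 2W_5$ are correct, and the argument is robust in the sense that the only constituent of $\mathrm{coker}(\mathrm{ev})$ that could obstruct $\mathfrak{A}_5\mathbb{Q}$-factoriality is a trivial summand, which is excluded merely by $F_2(O)\neq 0$ (so you do not even need full injectivity of $\mathrm{ev}$, and non-$\mathbb{Q}$-factoriality already follows from $15>10$). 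The one point you must justify is that the identification of the defect space with $\mathrm{coker}(\mathrm{ev})$ from \cite{Clemens} and \cite{En99} is an isomorphism of $\mathfrak{A}_5$-representations and not merely an equality of dimensions; those references are not equivariant, so you would need to trace naturality through their construction (the paper's local argument at a node sidesteps this entirely).

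The genuine gap is the rationality assertion. You set up the projection from a node and correctly observe that triviality or splitting of the intermediate Jacobian would not suffice, but then you only \emph{conjecture} that the discriminant sextic is degenerate enough for the conic bundle to acquire a rational section; no such section, and no degeneration of the discriminant, is actually exhibited. As written, the third claim of the theorem is not proved. The paper does not construct anything here either: it derives rationality directly from \cite[Theorem~8.1]{Prokhorov} or \cite[Theorem~1.3]{CheltsovPrzyjalkowskiShramov}. To complete your approach you would have to carry out the discriminant computation for $\lambda=\tfrac14$ and produce an explicit unirational parametrisation or section, or else fall back on one of these citations.
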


\begin{proof}
The threefold $X_{15}$ is not $\mathbb{Q}$-factorial by \cite[Corollary~1.7]{CheltsovPrzyjalkowskiShramov}.
Its rationality follows from  \cite[Theorem~8.1]{Prokhorov} or\cite[Theorem~1.3]{CheltsovPrzyjalkowskiShramov}.

Let us show that $X_{15}$ is $\mathfrak{A}_5\mathbb{Q}$-factorial.
Put
$$
O=[0:-1:-1:1:1],
$$
and denote by $\Gamma$ its stabilizer in $\mathfrak{A}_5$. Then $O\in\Sigma_{15}$, and $\Gamma\cong(\mathbb{Z}/2\mathbb{Z})^2$.
To prove that $X_{15}$ is $\mathfrak{A}_5\mathbb{Q}$-factorial, it is enough to show that the $\Gamma$-invariant local class group of the point $O$ is trivial.

The quartic $S_{15}$ is given by the equation $R=0$, where
$$
R=x_0^4+x_1^4+x_2^2+x_3^2+x_4^4-\frac{1}{4}\big(x_0^2+x_1^2+x_2^2+x_3^2+x_4^2\big)^2,
$$
and $x_0=-(x_1+x_2+x_3+x_4)$. Let $\sigma$ be the element of $\mathfrak{A}_5$ acting by
$$
\sigma(x_0)=x_0,\ \sigma(x_1)=x_3,\ \sigma(x_2)=x_4,\ \sigma(x_3)=x_1,\ \sigma(x_4)=x_2.
$$
Then $\sigma\in \Gamma$. Consider new homogeneous coordinates $y_1,\ldots,y_4$ in $\P^3$ such that
$$
x_1=y_1-y_3-y_4,\ x_2=y_2-y_4,\ x_3=y_1+y_3+y_4,\ x_4=y_2+y_4.
$$
Then $x_0=-2(y_1+y_2)$, and the point $O$ is $[0:0:0:1]$ in the new coordinates. One has
$$
\sigma(y_1)=y_1,\ \sigma(y_2)=y_2,\ \sigma(y_3)=-y_3,\ \sigma(y_4)=-y_4.
$$
Write
$$
R=R_2(y_1,y_2,y_3)y_4^2+R_3(y_1,y_2,y_3)y_4+R_4(y_1,y_2,y_3),
$$
where $R_i$ is a form of degree $i$. Then
$$
R_2(y_1,y_2,y_3)=4y_3^2-16y_1y_2.
$$
The threefold $X_{15}$ is given in the weighted projective space $\mathbb{P}(1,1,1,1,2)$
with weighted homogeneous coordinates $y_1, y_2, y_3, y_4$, and $w$ by equation $w^2=R$,
and $\sigma$ acts trivially on $w$.
Identify the point $O\in\mathbb{P}^3$ with the unique point of $X_{15}$ that is mapped to $O$ by the double cover morphism.
Then the tangent cone to $X_{15}$ at $O$ is a cone over a quadric surface $B$ given by equation
$$
16y_1y_2=4y_3^2-w^2
$$
in a three-dimensional projective space with coordinates $y_1, y_2, y_3$, and $w$.
The two lines $y_1=w-2y_3=0$ and $y_1=w+2y_3=0$ are contained in two different pencils of lines on~$B$.
They are interchanged by the involution $\sigma$,
which implies that the $\Gamma$-invariant local class group of the point $O$ is trivial.
\end{proof}

Since $X_{15}$ is rational, the $\mathfrak{A}_5$-action on $X_{15}$ gives an embedding
$$
\mathfrak{A}_5\hookrightarrow\mathrm{Cr}_3(\mathbb{C}).
$$
In the next section we will see that $X_{15}$ is $\mathfrak{A}_5$-birationally superrigid
(see \cite[Definition~3.1.1]{CheltsovShramov}),
so that the latter embedding is not conjugate to the three embeddings described in \cite[\S1.4]{CheltsovShramov}.

\section{$\mathfrak{A}_5$-birational superrigidity}

Let us use notation of Section~\ref{section:Hashimoto}.
Let $S$ be a quartic surface in $\P^3$ that is given by \eqref{equation:quartic},
and let $\tau\colon X\to\P^3$ be a double cover branched over $S$.
By Remark~\ref{remark:lifting-shifting}, the threefold $X$ is faithfully acted on by the group $\mathfrak{A}_5$.
Recall that $Q$ is the surface in $\P^3$ that is given by~$F_2=0$.
The surface $Q$ is smooth, so that $Q\cong\mathbb{P}^1\times\mathbb{P}^1$.
Denote by $H$ the class of the pull-back of the plane in $\mathbb{P}^3$ via $\tau$.

\begin{remark}
\label{remark:G-Fano}
Every $\mathfrak{A}_5$-invariant Weil divisor on $X$ is rationally equivalent to a multiple of $H$.
This follows from Theorems~\ref{theorem:X5}, \ref{theorem:X10}, \ref{theorem:X15}, and the Lefschetz theorem.
\end{remark}

Thus, $X$ is $\mathfrak{A}_5$-Fano threefold. 
The goal of this section is to prove the following result.

\begin{theorem}
\label{theorem:G-rigid}
The threefold $X$ is $\mathfrak{A}_5$-birationally superrigid if and only if $X\ne X_5$.
\end{theorem}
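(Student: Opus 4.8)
The \textbf{only if} direction is immediate: if $X=X_5$, then Theorem~\ref{theorem:X5} produces an $\mathfrak{A}_5$-equivariant birational map from $X_5$ to the conic bundle $\phi^\prime\colon W^\prime\to Y_5$ over the quintic del Pezzo surface $Y_5$. This is an $\mathfrak{A}_5$-Mori fiber space which is not $\mathfrak{A}_5$-isomorphic to $X_5$, so $X_5$ is not $\mathfrak{A}_5$-birationally superrigid. For the \textbf{if} direction, observe first that $X$ is a del Pezzo threefold with $-K_X=2H$ and $(-K_X)^3=16$; by Remark~\ref{remark:G-Fano} together with the ($\mathfrak{A}_5$-)$\mathbb{Q}$-factoriality established in Theorems~\ref{theorem:X5}, \ref{theorem:X10} and~\ref{theorem:X15}, it is an $\mathfrak{A}_5$-Fano threefold with $\mathrm{rk}\,\mathrm{Cl}(X)^{\mathfrak{A}_5}=1$ (all intersection numbers below are computed with $\mathfrak{A}_5$-invariant classes, which is legitimate even for the merely $\mathfrak{A}_5\mathbb{Q}$-factorial threefold $X_{15}$). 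The plan is to run the $\mathfrak{A}_5$-equivariant Noether--Fano--Iskovskikh method, whose criterion (see \cite[\S3.1]{CheltsovShramov}) reduces $\mathfrak{A}_5$-birational superrigidity to the following statement: for every $\mathfrak{A}_5$-invariant mobile linear system $\mathcal{M}\subset|{-}nK_X|=|2nH|$ the log pair $(X,\tfrac1n\mathcal{M})$ is canonical. Assuming this fails, the non-empty locus of non-canonicity is $\mathfrak{A}_5$-invariant and of codimension at least two, so its minimal center is either an $\mathfrak{A}_5$-irreducible curve or an $\mathfrak{A}_5$-orbit of points; the task is to exclude both when $X\neq X_5$.

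For two general members $D_1,D_2\in\mathcal{M}$ put $Z=D_1\cdot D_2\equiv 4n^2H^2$, so that $Z\cdot H=8n^2$. If the center is an $\mathfrak{A}_5$-orbit of curves $\{C_j\}$, then $\mathrm{mult}_{C_j}\mathcal{M}>n$ for each $j$ (the blow-up of a smooth curve has discrepancy $1$), whence the coefficient of $C_j$ in $Z$ exceeds $n^2$ and
$$
8n^2=Z\cdot H>n^2\sum_j\bigl(C_j\cdot H\bigr).
$$
Thus the orbit has total $H$-degree at most $7$, and its image in $\mathbb{P}^3$ is an $\mathfrak{A}_5$-invariant curve of degree at most $7$. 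Since $\mathbb{P}(W_4)$ carries no $\mathfrak{A}_5$-invariant curves of degree $\le 3$ (cf.\ the argument in the proof of Theorem~\ref{theorem:X10} and \cite[Lemma~5.3.3]{CheltsovShramov}), I would invoke the classification of low-degree $\mathfrak{A}_5$-invariant curves in $\mathbb{P}(W_4)$ to show that the minimal degree of such a curve exceeds $7$ (handling the finitely many borderline cases, such as orbits of lines, by the bound $\mathrm{mult}_C\mathcal{M}\le n$), so that no curve center occurs.

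The substance lies in the case of a point center. Let $\Xi$ be the underlying $\mathfrak{A}_5$-orbit, of length $k$. At a \emph{smooth} point $P$ the $4n^2$-inequality of the method of maximal singularities gives $\mathrm{mult}_P Z>4n^2$ for every $P\in\Xi$. If $T\equiv tH$ is an $\mathfrak{A}_5$-invariant surface through $\Xi$ sharing no component with the support of $Z$, then
$$
8tn^2=Z\cdot T\ge\sum_{P\in\Xi}\mathrm{mult}_P Z\cdot\mathrm{mult}_P T>4n^2k,
$$
forcing $k<2t$; a contradiction whenever $k\ge 2t$. Now every $\mathfrak{A}_5$-orbit in $\mathbb{P}^3$ lies on a member of the pencil $\mathcal{P}$ of $\mathfrak{A}_5$-invariant quartics (vanishing at one point of the orbit is a single linear condition on $\mathcal{P}$ by invariance), and its $\tau$-preimage therefore lies on $T=\tau^\ast(\text{quartic})\in|4H|$, i.e.\ $t\le 4$. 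For $X\neq X_5$ every $\mathfrak{A}_5$-orbit of smooth points of $X$ has length $k\ge 10$: the smallest orbits in $\mathbb{P}^3$ have length $5$, a length-$5$ orbit lies on the branch surface only for $X_5$ (where it degenerates into the five nodes), and otherwise it pulls back to an orbit of length $10$. Since $k\ge 10\ge 8=2\cdot4$, the displayed inequality is violated, excluding smooth point centers. It is exactly here that $X_5$ is exceptional: its length-$5$ orbit of nodes admits no $\mathfrak{A}_5$-invariant surface of degree $\le 2$ through it, so the numerics $k\ge 2t$ can no longer be achieved and the inequality fails to close.

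It remains to treat centers at the nodes of $X_{10}$, $X_{10}^\prime$ and $X_{15}$. These are terminal ordinary double points, where the simple blow-up has discrepancy $1$ and the $4n^2$-inequality must be replaced by its node counterpart; since the nodes lie on the $\mathfrak{A}_5$-invariant ramification divisor $R\in|2H|$, I would combine this with $Z\cdot R=16n^2$ over the orbits of nodes of lengths $10$, $10$ and $15$ to again reach a contradiction. The main obstacle of the whole argument is precisely this point-orbit analysis: one must establish the $4n^2$-type inequality and its correct singular-point analogue, and — most delicately — control the minimal degree of an $\mathfrak{A}_5$-invariant surface through each orbit so that the bound $k\ge 2t$ holds for every orbit \emph{except} the length-$5$ orbit of nodes on $X_5$. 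The transversality hypotheses (that the chosen invariant surface $T$ shares no component with $Z$, and the corresponding statements at the nodes) also require care, and are where the detailed orbit and invariant-surface geometry of $\mathbb{P}(W_4)$ recorded in \cite{CheltsovShramov} and \cite{CheltsovPrzyjalkowskiShramov} enters.
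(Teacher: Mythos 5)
Your overall strategy (equivariant Noether--Fano, excluding curve centers by a degree bound and point centers by multiplicity estimates, with nodes treated separately) is the same as the paper's, and the ``only if'' direction via Theorem~\ref{theorem:X5} is identical. But there is a decisive gap in the curve case. You correctly reduce to an $\mathfrak{A}_5$-invariant curve of degree at most $7$ in $\mathbb{P}(W_4)$, and then assert that the classification of invariant curves shows the minimal degree of such a curve exceeds $7$. It does not: Lemma~\ref{lemma:A5-space} shows that there are exactly five $\mathfrak{A}_5$-invariant curves of degree less than eight, namely the sextics $\mathcal{B}_6$, $\mathcal{C}$, $\mathcal{C}^\prime$, $\mathcal{L}_6$, and $\mathcal{L}_6^\prime$, so the degree bound alone excludes nothing. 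Ruling these out is the heart of the paper's proof: $\mathcal{B}_6$ is eliminated because its preimage on $X$ is an irreducible double cover branched over $\Sigma_{12}\cup\Sigma_{12}^\prime$ and hence cannot be the center, while for $\mathcal{C}$, $\mathcal{C}^\prime$, $\mathcal{L}_6$, $\mathcal{L}_6^\prime$ one must locate the unique smooth invariant quartic $K3$ surface $Y$ containing the center (Corollary~\ref{corollary:A5-space}), use the relation $\Sigma+\Sigma^\prime\sim 3H\vert_Y$ together with $\Sigma^2=(\Sigma^\prime)^2<0$, and derive a contradiction from $\mathrm{mult}_{\Sigma}\mathcal{D}>n$. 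Your proposal has no mechanism for any of this.

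The point cases also contain unresolved issues that the paper handles with specific tools. For smooth points you intersect $D_1\cdot D_2$ with a \emph{fixed} invariant quartic through the orbit; since that surface is rigid, you cannot arrange that it avoids the components of the cycle $D_1\cdot D_2$. This is precisely why the paper proves Lemma~\ref{lemma:isolation}, producing for each orbit a linear system of surfaces of a tailored degree $m$ with no base curves or fixed components, so that a \emph{general} member may be used and the inequality $2m\leqslant|\Xi|$ holds in every case. For the nodes, the correct local input is the bound $s>\frac{n}{2}$ on the multiplicity along the exceptional divisor of the blow-up (\cite[Theorem~1.7.20]{CheltsovUMN}), and the resulting nef-divisor computation $0\leqslant\widetilde{M}\cdot\widetilde{D}_1\cdot\widetilde{D}_2$ only excludes the orbit of length $15$; the length-$10$ orbits survive the numerics and are eliminated by a separate geometric argument using the conics through four coplanar points of $\Sigma_{10}$ or $\Sigma_{10}^\prime$ supplied by Remark~\ref{remark:10-points}. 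Your sketch via $Z\cdot R$ with the fixed ramification surface $R$ inherits the same transversality problem and omits this last, necessary step.
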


\begin{corollary}\label{corollary:G-rigid-conjugacy}
The group $\mathrm{Cr}_3(\mathbb{C})$ contains at least four non-conjugate subgroups isomorphic to~$\mathfrak{A}_5$.
\end{corollary}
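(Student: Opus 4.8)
The plan is to deduce the corollary directly from Theorems~\ref{theorem:X15} and~\ref{theorem:G-rigid} together with the standard dictionary relating subgroups of the Cremona group to rational $G$-varieties. By Theorem~\ref{theorem:X15} the threefold $X_{15}$ is rational, and by Remark~\ref{remark:lifting-shifting} it carries a faithful action of $\A_5$; hence it gives an embedding $\A_5\hookrightarrow\mathrm{Cr}_3(\mathbb{C})$. Applying Theorem~\ref{theorem:G-rigid} with $X=X_{15}$, which is legitimate since $X_{15}\ne X_5$, we learn that $X_{15}$ is $\A_5$-birationally superrigid. As \cite[\S1.4]{CheltsovShramov} already provides three pairwise non-conjugate embeddings of $\A_5$ into $\mathrm{Cr}_3(\mathbb{C})$, it remains only to show that the embedding furnished by $X_{15}$ is conjugate to none of these three; the four embeddings then realise four distinct conjugacy classes.

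For this I would use the fact that two embeddings $\A_5\hookrightarrow\mathrm{Cr}_3(\mathbb{C})$ are conjugate if and only if the associated faithful rational $\A_5$-threefolds are $\A_5$-equivariantly birational (see \cite{DoIs06} and \cite[Definition~1.1.5]{CheltsovShramov}). Each of the three embeddings of \cite[\S1.4]{CheltsovShramov} is realised by an explicit rational $\A_5$-Mori fiber space $Y_j$, where $j\in\{1,2,3\}$; by Remark~\ref{remark:G-Fano} the threefold $X_{15}$ is itself an $\A_5$-Mori fiber space. Suppose that the embedding coming from $X_{15}$ were conjugate to the one coming from some $Y_j$. Then $X_{15}$ and $Y_j$ would be $\A_5$-birational. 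But $\A_5$-birational superrigidity of $X_{15}$ means precisely that the only $\A_5$-Mori fiber space $\A_5$-birational to $X_{15}$ is $X_{15}$ itself, so this would force an $\A_5$-isomorphism $Y_j\cong X_{15}$.

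The argument therefore reduces to the observation that no $Y_j$ is $\A_5$-isomorphic to the singular quartic double solid $X_{15}$. The three models underlying the embeddings of \cite[\S1.4]{CheltsovShramov} are of a manifestly different nature from $X_{15}$, being separated from it by elementary $\A_5$-invariants such as smoothness, $\Q$-factoriality, the anticanonical degree, or the type of the associated Mori contraction; I would read these distinctions off their explicit descriptions in \cite{CheltsovShramov}. This gives the required contradiction, so the embedding defined by $X_{15}$ is new, and together with the three known ones we obtain at least four non-conjugate copies of $\A_5$ in $\mathrm{Cr}_3(\mathbb{C})$. The one genuinely hard input is the superrigidity statement of Theorem~\ref{theorem:G-rigid}: once it is granted, the corollary is formal, and the only remaining check, namely the comparison with the three models of \cite[\S1.4]{CheltsovShramov}, is settled by their known structure.
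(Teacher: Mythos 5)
Your proposal is correct and follows essentially the same route as the paper: the authors likewise combine the rationality of $X_{15}$ (Theorem~\ref{theorem:X15}) with its $\mathfrak{A}_5$-birational superrigidity (Theorem~\ref{theorem:G-rigid}), the standard dictionary between conjugacy classes of $\mathfrak{A}_5\subset\mathrm{Cr}_3(\mathbb{C})$ and $\mathfrak{A}_5$-birational classes of rational $\mathfrak{A}_5$-Mori fiber spaces, and the pairwise non-conjugacy of the three known embeddings, all cited from \cite{CheltsovShramov}. The only difference is that you spell out the final comparison step (no model from \cite[\S1.4]{CheltsovShramov} is $\mathfrak{A}_5$-isomorphic to the singular quartic double solid $X_{15}$), which the paper leaves implicit in its references.
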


\begin{proof}
Since the threefold  $X_{15}$ is rational by Theorem~\ref{theorem:X15},
the required assertion follows from Theorem~\ref{theorem:G-rigid}, \cite[Remark~1.2.1]{CheltsovShramov}, \cite[Example~1.3.9]{CheltsovShramov},
and \cite[Theorem~1.4.1]{CheltsovShramov}.
\end{proof}

\begin{corollary}\label{corollary:G-rigid-conjugacy-S-5}
The group $\mathrm{Cr}_3(\mathbb{C})$ contains at least three non-conjugate subgroups isomorphic to the permutation group $\mathfrak{S}_5$.
\end{corollary}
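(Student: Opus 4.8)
The plan is to deduce the statement about $\mathfrak{S}_5$ from the statement about $\mathfrak{A}_5$ in Corollary~\ref{corollary:G-rigid-conjugacy}, exploiting that $\mathfrak{A}_5=[\mathfrak{S}_5,\mathfrak{S}_5]$ is the commutator subgroup of $\mathfrak{S}_5$ and hence characteristic. Concretely, if $G_1,G_2\subset\mathrm{Cr}_3(\mathbb{C})$ are isomorphic to $\mathfrak{S}_5$ and conjugate by some $h$, then conjugation by $h$ is an isomorphism $G_1\to G_2$ carrying $[G_1,G_1]$ onto $[G_2,G_2]$, so it carries the icosahedral subgroup of $G_1$ onto that of $G_2$; thus the two copies of $\mathfrak{A}_5$ are conjugate as well. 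Taking the contrapositive, it suffices to exhibit three subgroups isomorphic to $\mathfrak{S}_5$ in $\mathrm{Cr}_3(\mathbb{C})$ whose icosahedral subgroups represent three of the pairwise non-conjugate classes of $\mathfrak{A}_5$ furnished by Corollary~\ref{corollary:G-rigid-conjugacy}.

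First I would promote the action on $X_{15}$ to an $\mathfrak{S}_5$-action. The coordinates $x_0,\ldots,x_4$ on $\mathbb{P}^3=\mathbb{P}(W_4)$ are permuted by the entire group $\mathfrak{S}_5$, and the invariants $F_2$ and $F_4$ in \eqref{equation:quartic} are symmetric; hence every member of Hashimoto's pencil, in particular $S_{15}$, is genuinely $\mathfrak{S}_5$-invariant. Writing $X_{15}$ as $w^2=R$ in $\mathbb{P}(1,1,1,1,2)$ with $R=F_4-\frac14 F_2^2$ a symmetric quartic, the prescription $w\mapsto w$ lifts every element of $\mathfrak{S}_5$ and produces a faithful $\mathfrak{S}_5$-action commuting with the Galois involution. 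Since $X_{15}$ is rational by Theorem~\ref{theorem:X15}, this gives an embedding $\mathfrak{S}_5\hookrightarrow\mathrm{Cr}_3(\mathbb{C})$ whose restriction to $\mathfrak{A}_5$ is precisely the new icosahedral embedding of Corollary~\ref{corollary:G-rigid-conjugacy}.

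The remaining two copies I would take among the linear actions on $\mathbb{P}^3$. The representation $W_4$ of $\mathfrak{A}_5$ is the restriction of the standard representation of $\mathfrak{S}_5$, so the action on $\mathbb{P}(W_4)$ extends to $\mathfrak{S}_5\subset\mathrm{PGL}_4(\mathbb{C})$. Likewise $U_4$ is the unique faithful four-dimensional irreducible representation of $2.\mathfrak{A}_5$, hence its isomorphism class is preserved by the outer automorphism of $\mathfrak{A}_5$; realizing that automorphism by an element of $\mathrm{PGL}_4(\mathbb{C})$ normalizing the image extends the action on $\mathbb{P}(U_4)$ to $\mathfrak{S}_5$. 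As $\mathbb{P}(W_4)$ and $\mathbb{P}(U_4)$ are rational, each yields an embedding $\mathfrak{S}_5\hookrightarrow\mathrm{Cr}_3(\mathbb{C})$; by \cite[Theorem~1.4.1]{CheltsovShramov} their icosahedral subgroups are two of the classical non-conjugate classes of $\mathfrak{A}_5$, and by Corollary~\ref{corollary:G-rigid-conjugacy} neither is conjugate to the one arising on $X_{15}$. Combined with the reduction of the first paragraph, the three resulting subgroups isomorphic to $\mathfrak{S}_5$ are pairwise non-conjugate.

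The main obstacle is not the group theory but the control of conjugacy classes in $\mathrm{Cr}_3(\mathbb{C})$: one must know that the icosahedral subgroups underlying $\mathbb{P}(W_4)$ and $\mathbb{P}(U_4)$ are non-conjugate in the full Cremona group (not merely in $\mathrm{PGL}_4(\mathbb{C})$) and distinct from the class coming from $X_{15}$. This is exactly what the classification in \cite{CheltsovShramov} together with Corollary~\ref{corollary:G-rigid-conjugacy} supplies, and it is the sole point where the argument relies on external input rather than on the constructions of this paper. A secondary, purely representation-theoretic verification is that the outer automorphism of $\mathfrak{A}_5$ fixes the class of $U_4$, which is immediate from the uniqueness of the faithful four-dimensional irreducible representation of $2.\mathfrak{A}_5$.
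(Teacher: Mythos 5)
Your reduction to the non-conjugacy of the underlying icosahedral subgroups (using that $\mathfrak{A}_5=[\mathfrak{S}_5,\mathfrak{S}_5]$ is characteristic) is sound, and your use of $X_{15}$ with its lifted $\mathfrak{S}_5$-action agrees with the paper. However, the paper's other two models are different from yours: it takes $\mathbb{P}^3=\mathbb{P}(W_4)$ and $Q\times\mathbb{P}^1$ (where $Q$ is the $\mathfrak{S}_5$-invariant quadric $F_2=0$, with trivial action on the second factor), separates $X_{15}$ from both via its $\mathfrak{S}_5$-birational superrigidity coming from Theorem~\ref{theorem:G-rigid}, and separates $\mathbb{P}^3$ from $Q\times\mathbb{P}^1$ by the fixed-point criterion of \cite[Proposition~A.4]{ReYou00} applied to $(\mathbb{Z}/2\mathbb{Z})^2\subset\mathfrak{A}_5$, which fixes a point of $\Sigma_{15}\subset\mathbb{P}^3$ but has no fixed point on $Q$.

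The genuine gap in your version is the non-conjugacy of the icosahedral subgroups arising from $\mathbb{P}(W_4)$ and $\mathbb{P}(U_4)$. You justify it by saying that, by \cite[Theorem~1.4.1]{CheltsovShramov}, these are ``two of the classical non-conjugate classes''; but that theorem concerns $\mathbb{P}^3=\mathbb{P}(U_4)$ together with the quintic del Pezzo threefold $V_5$ and the Mukai--Umemura threefold, not $\mathbb{P}(W_4)$. In fact $\mathbb{P}(W_4)$ cannot be one of the book's three birationally rigid models: the diagram of Theorem~\ref{theorem:X5} exhibits an $\mathfrak{A}_5$-equivariant birational map from $\mathbb{P}(W_4)$ to a conic bundle over the del Pezzo surface $Y_5$, so $\mathbb{P}(W_4)$ is not $\mathfrak{A}_5$-birationally rigid. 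Hence neither that theorem nor Corollary~\ref{corollary:G-rigid-conjugacy} (whose four classes are the book's three plus the one on $X_{15}$) gives you what you need for $\mathbb{P}(W_4)$ versus $\mathbb{P}(U_4)$. The claim is nevertheless true and can be repaired: either invoke the $\mathfrak{A}_5$-birational rigidity of $\mathbb{P}(U_4)$ itself (which then cannot be $\mathfrak{A}_5$-birational to the conic bundle above, hence not to $\mathbb{P}(W_4)$), or, closer to the paper's own method, apply \cite[Proposition~A.4]{ReYou00}: the subgroup $(\mathbb{Z}/2\mathbb{Z})^2$ fixes the point $[0:-1:-1:1:1]$ of $\mathbb{P}(W_4)$, while it has no fixed points on $\mathbb{P}(U_4)$ because $U_4$ restricted to $2.\mathrm{D}_4$ has no one-dimensional subrepresentations (as noted in the proof of Theorem~\ref{theorem:U4}). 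Your remaining points --- that the $\mathfrak{A}_5$-action on $\mathbb{P}(U_4)$ extends to $\mathfrak{S}_5$ because the outer automorphism preserves the class of $U_4$, and that the superrigidity of $X_{15}$ separates it from both linear models --- are correct.
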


\begin{proof}
Note that  $W_4$ is a restriction to $\mathfrak{A}_5$ of the representation of the group $\mathfrak{S}_5$.
Consider the corresponding action of the group $\mathfrak{S}_5$ on $\mathbb{P}^3$.
Since $S_{15}$ is $\mathfrak{S}_5$-invariant, $X_{15}$ is also acted on by $\mathfrak{S}_5$ (cf. Remark~\ref{remark:lifting-shifting}).
Moreover, the quadric $Q\cong\mathbb{P}^1\times\mathbb{P}^1$ is $\mathfrak{S}_5$-invariant.
Thus, we have three rational Fano threefolds acted on by $\mathfrak{S}_5$.
They are $\mathbb{P}^3$, $Q\times\mathbb{P}^1$ (with a trivial action on the second factor), and $X_{15}$.
Since $X_{15}$ is $\mathfrak{A}_5$-birationally superrigid by Theorem~\ref{theorem:G-rigid},
it is also $\mathfrak{S}_5$-birationally superrigid.
Hence, there are no $\mathfrak{S}_5$-birational maps $X_{15}\dasharrow\mathbb{P}^3$ and $X_{15}\dasharrow Q\times\mathbb{P}^1$.
On the other hand, the abelian subgroup
$$
(\mathbb{Z}/2\mathbb{Z})^2\subset\mathfrak{A}_5\subset\mathfrak{S}_5
$$
fixes a point in $\Sigma_{15}\subset\mathbb{P}^3$ and does not have fixed points in $Q$.
Therefore, there is no \mbox{$\mathfrak{S}_5$-birational} map $\mathbb{P}^3\dasharrow Q\times\mathbb{P}^1$
by \cite[Proposition~A.4]{ReYou00}, see also \cite[Theorem~1.1.1]{CheltsovShramov}.
\end{proof}

We already described $\mathfrak{A}_5$-orbits in $\P^3$ of small length in Section~\ref{section:Hashimoto}.
Now let us describe $\mathfrak{A}_5$-invariant curves in $\P^3$ of degree less than eight.
As we will see a bit later, they all lie in the surface given by $F_3=0$,
and one of them also lies in the surface $Q$.
Thus, we need to take a closer look at these surfaces.

The surface $Q$ does not contain the $\mathfrak{A}_5$-orbits $\Sigma_5$, $\Sigma_{10}$, $\Sigma_{10}^\prime$, and $\Sigma_{15}$,
and it does contain  the $\mathfrak{A}_5$-orbits $\Sigma_{12}$ and $\Sigma_{12}^\prime$, see \cite[Lemma~5.3.3(vi)]{CheltsovShramov}.
Moreover, since $Q$ contains only two $\mathfrak{A}_5$-orbits of length $12$,
and $W_4$ is an irreducible representation of $\mathfrak{A}_5$,
it follows from~\mbox{\cite[Lemma~6.4.3(i)]{CheltsovShramov}} that the $\mathfrak{A}_5$-action on $Q$ is twisted diagonal,
i.e. the quadric $Q$ can be identified with $\mathbb{P}(U_2)\times\mathbb{P}(U_2^\prime)$.

Denote by $\mathrm{S}_3$ the surface in $\P^3$ that is given by $F_3=0$,
and denote by $\mathcal{B}_6$ the curve in $\P^3$ that is given by $F_2=F_3=0$.
Then $\mathrm{S}_3$ is a smooth surface known as the \emph{Clebsch diagonal cubic surface},
and $\mathcal{B}_6$ is a smooth irreducible curve of genus four  known as the \emph{Bring's curve}.

\begin{lemma}
\label{lemma:Bring}
The curve $\mathcal{B}_6$ is the only $\mathfrak{A}_5$-invariant curve in $Q$ of degree less than eight.
It contains the $\mathfrak{A}_5$-orbits $\Sigma_{12}$ and $\Sigma_{12}^\prime$. Moreover, the set
$\Sigma_{12}\cup\Sigma_{12}^\prime$ is cut out on $\mathcal{B}_6$ by the equation $F_4=0$.
\end{lemma}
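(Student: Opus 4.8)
The plan is to transport the whole statement into the representation theory of the twisted diagonal action, using the identification $Q\cong\P(U_2)\times\P(U_2^\prime)$ established above, and then to extract the numerical assertions from a one-line computation with the $\mathbb{Z}/5$-eigenpoints. The key initial remark is that $\A_5$, and hence $2.\A_5$, is perfect and therefore has no non-trivial characters; consequently an $\A_5$-invariant curve $Z\subset Q$ of bidegree $(a,b)$ is the zero locus of a genuine $2.\A_5$-invariant section of $\mathcal{O}_Q(a,b)$. Since $U_2$ and $U_2^\prime$ are self-dual,
$$
H^0\big(Q,\mathcal{O}_Q(a,b)\big)\cong\mathrm{Sym}^a(U_2)\otimes\mathrm{Sym}^b(U_2^\prime)
$$
as a $2.\A_5$-representation, and the central involution acts on the right hand side by $(-1)^{a+b}$. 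Thus an invariant curve forces $a+b$ to be even, and, because the hyperplane class of $\P^3$ restricts to $\mathcal{O}_Q(1,1)$, the degree of $Z$ equals $a+b$; the range ``degree less than eight'' becomes $a+b\in\{2,4,6\}$.

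First I would prove uniqueness of $\mathcal{B}_6$. For $0\le n\le 5$ the symmetric powers $\mathrm{Sym}^n(U_2)$ are the pairwise non-isomorphic irreducible representations $I,U_2,W_3,U_4,W_5,U_6$, and likewise $\mathrm{Sym}^n(U_2^\prime)$ runs through $I,U_2^\prime,W_3^\prime,U_4,W_5,U_6$ (recall $U_4=\mathrm{Sym}^3(U_2)$). Hence for $a,b\le5$ the trivial summand occurs in $\mathrm{Sym}^a(U_2)\otimes\mathrm{Sym}^b(U_2^\prime)$ exactly when these two irreducibles coincide, which happens only for $a=b\in\{0,3,4,5\}$. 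Within the range $a+b\in\{2,4,6\}$ this leaves $(a,b)=(3,3)$ as the only curve; the two remaining bidegrees with a ruling factor, $(6,0)$ and $(0,6)$, are excluded because the icosahedral action on $\P(U_2)$ has no invariant effective divisor of degree $6$, its minimal orbit having length $12$. So there is a unique invariant section in bidegree $(3,3)$ up to scale, and since $F_3|_Q$ is such a section and is non-zero (as $F_3\notin(F_2)$), its zero locus is precisely $\mathcal{B}_6=Q\cap\mathrm{S}_3$.

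Next I would locate the two orbits of length $12$. Take the $5$-cycle $\sigma\in\A_5$ cyclically permuting $x_0,\dots,x_4$; its fixed points in $\P^3=\P(W_4)$ are the four points $v_k=[1:\zeta^k:\zeta^{2k}:\zeta^{3k}:\zeta^{4k}]$ with $k=1,2,3,4$, where $\zeta=e^{2\pi\sqrt{-1}/5}$. For these points and for $i\in\{2,3,4\}$ one computes $F_i(v_k)=\sum_{j=0}^4\zeta^{ijk}=0$, since $ik\not\equiv0\pmod5$; hence each $v_k$ lies on $Q$, on $\mathrm{S}_3$, and on $\{F_4=0\}$ simultaneously. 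Because every point of $\Sigma_{12}\cup\Sigma_{12}^\prime$ has stabiliser $\mathbb{Z}/5$ while $\P^3$ carries no $\A_5$-orbit of length $1$ or $6$, the $24$ fixed points of the six conjugate subgroups $\mathbb{Z}/5$ are exactly $\Sigma_{12}\cup\Sigma_{12}^\prime$. Invariance of $\mathcal{B}_6$ now gives $\Sigma_{12}\cup\Sigma_{12}^\prime\subset\mathcal{B}_6$, which is the second assertion, together with the inclusion $\Sigma_{12}\cup\Sigma_{12}^\prime\subset\{F_4=0\}$.

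Finally, to identify the scheme cut out by $F_4$, I would run a B\'ezout count, for which the main point is that $\mathcal{B}_6\not\subset\{F_4=0\}$. Indeed $F_4$ is not proportional to $F_2^2$, so $F_4|_Q\neq0$ and $\{F_4=0\}\cap Q$ is an invariant curve of bidegree $(4,4)$; were the irreducible curve $\mathcal{B}_6$ of bidegree $(3,3)$ a component of it, the residual cycle would be an invariant effective divisor of bidegree $(1,1)$, which is impossible since $\mathrm{Sym}^1(U_2)\otimes\mathrm{Sym}^1(U_2^\prime)=U_2\otimes U_2^\prime$ has no trivial summand. Hence $\mathcal{B}_6\cap\{F_4=0\}$ is zero-dimensional of length $\deg(\mathcal{B}_6)\cdot4=24$; as it already contains the $24$ distinct points of $\Sigma_{12}\cup\Sigma_{12}^\prime$, it coincides with this reduced set, which is the third assertion. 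The main obstacle is the bookkeeping of the first two paragraphs: one must be certain of the decomposition of the symmetric powers $\mathrm{Sym}^n(U_2)$ and must separately dispose of the ruling bidegrees $(6,0)$ and $(0,6)$, after which the eigenpoint computation and the intersection count are routine.
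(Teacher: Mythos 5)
Your proof is correct, but it follows a genuinely different route from the one in the paper. You work entirely inside the representation theory of the twisted diagonal action: since $2.\mathfrak{A}_5$ is perfect, an invariant curve of bidegree $(a,b)$ on $Q\cong\mathbb{P}(U_2)\times\mathbb{P}(U_2^\prime)$ must come from a trivial summand of $\mathrm{Sym}^a(U_2)\otimes\mathrm{Sym}^b(U_2^\prime)$, and the irreducibility of $\mathrm{Sym}^n(U_2)$ for $n\leqslant 5$ (the chain $I, U_2, W_3, U_4, W_5, U_6$ from the McKay correspondence) kills every bidegree except $(3,3)$ in the relevant range; this simultaneously rules out odd degrees via the central involution, which the paper never needs to address explicitly. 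The paper instead quotes several lemmas from the reference on the icosahedron for the lower bound $d\geqslant 4$ and for the orbit structure, excludes $d=6,7$ by intersecting with the Clebsch cubic $\mathrm{S}_3$ and matching $3d$ against the list of orbit lengths, and disposes of $d=4,5$ by intersecting with an auxiliary invariant rational curve of bidegree $(1,7)$. For the orbits of length $12$ you compute the eigenpoints of a $5$-cycle directly, where the paper cites a lemma; for the final assertion both arguments are the same B\'ezout count of length $24$, though your verification that $\mathcal{B}_6\not\subset\{F_4=0\}$ (residual bidegree $(1,1)$ has no invariant section) is cleaner than the paper's appeal to the smooth genus-nine curve $F_2=F_4=0$, and your observation that the $24$ points are already distinct closes a small gap the paper leaves implicit (why the cycle is not, say, $2\Sigma_{12}$). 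The trade-off is that your argument is self-contained but rests entirely on the correctness of the symmetric-power decomposition, which you rightly flag as the load-bearing computation; the paper's argument is shorter on the page but outsources more to the cited monograph.
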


\begin{proof}
By \cite[Lemma~5.1.5]{CheltsovShramov}, the curve $\mathcal{B}_6$ contains $\Sigma_{12}$ and $\Sigma_{12}^\prime$,
and does not contain other $\mathfrak{A}_5$-orbits of length less than $30$.
On the other hand, $\mathcal{B}_6$ is not contained in the surface given by $F_4=0$,
because the curve in $\P^3$ that is given by $F_2=F_4=0$ is a smooth curve of genus nine.
Therefore, the equation $F_4=0$ cuts out a subset of $\mathcal{B}_6$ that consists of
$$
4\mathrm{deg}(\mathcal{B}_6)=24
$$
points (counted with multiplicities).
Hence, we see that $\Sigma_{12}\cup\Sigma_{12}^\prime$ is cut out on $\mathcal{B}_6$ by the equation $F_4=0$.

Let $\Gamma$ be a curve in $Q$ of degree $d<8$. Then $d\geqslant 4$ by \cite[Lemma~5.3.3(ix)]{CheltsovShramov}.
Let us show that $\Gamma=\mathcal{B}_6$.
If $\Gamma$ is contained in $\mathrm{S}_3$, then $\Gamma=\mathcal{B}_6$ by construction.
Thus, we may assume that this is not the case.
Therefore, $d\ne 6$ and $d\ne 7$, because
$$
\mathrm{S}_3\cdot\Gamma=3d
$$
and the lengths of $\mathfrak{A}_5$-orbits in $\mathbb{P}^3$ are $5$, $10$, $12$, $15$, $20$, $30$, and $60$.
Hence, either $d=4$ or $d=5$.

The curve $\Gamma$ is a divisor of bi-degree $(a,b)$ on $Q\cong\mathbb{P}^1\times\mathbb{P}^1$, where $a$ and $b$ are non-negative integers.
Without loss of generality, we may assume that $a\leqslant b$.
Since
$$
a+b=d\in\{4,5\},
$$
the pair $(a,b)$ must be one of the following: $(0,4)$, $(1,3)$, $(2,2)$, $(0,5)$, $(1,4)$, or $(2,3)$.
The cases $(0,4)$ and $(0,5)$ are impossible by \cite[Lemma~6.4.1]{CheltsovShramov}.
Moreover,  $Q$ contains no $\mathfrak{A}_5$-invariant effective divisors of bi-degree $(1,3)$ and $(1,4)$ by \cite[Lemma~6.4.11(o)]{CheltsovShramov},
because the $\mathfrak{A}_5$-action on $Q$ is twisted diagonal.
Thus, either $(a,b)=(2,2)$ or $(a,b)=(2,3)$.
By \cite[Lemma~6.4.3(ii)]{CheltsovShramov}, the quadric $Q$ contains a smooth  rational curve $C_{1,7}$ that is a divisor of bi-degree $(1,7)$.
One has
$$
\Gamma\cdot C_{1,7}=7a+b\in\{16,18\},
$$
which is impossible, because the lengths of $\mathfrak{A}_5$-orbits in $C_{1,7}\cong\mathbb{P}^1$ are $12$, $20$, $30$, and~$60$.
\end{proof}

Let us describe four more $\mathfrak{A}_5$-invariant sextic curves contained in the surface~$\mathrm{S}_3$.
Recall from \cite[Lemma 6.3.3]{CheltsovShramov} that $\mathrm{S}_3$ contains two $\mathfrak{A}_5$-invariant curves
$\mathcal{L}_6$ and $\mathcal{L}_6^\prime$ such that each of them is a disjoint union of six lines,
and there is a commutative diagram
$$
\xymatrix{
&&\mathrm{S}_3\ar@{->}[dl]_{\pi}\ar@{->}[dr]^{\pi^\prime}&&\\
&\mathbb{P}^{2}\ar@{-->}[rr]^{\varsigma}&&\mathbb{P}^{2}&}
$$
where $\pi$ (respectively, $\pi^\prime$) is an $\mathfrak{A}_5$-birational morphism that contracts the lines of $\mathcal{L}_6$
(respectively, the lines of $\mathcal{L}_6^\prime$) to the unique $\mathfrak{A}_5$-orbit of length six in~$\P^2$. One has
$$
\mathcal{L}_6+\mathcal{L}_6^\prime\sim -4K_{\mathrm{S}_3}
$$
by construction. By \cite[Lemma 6.3.12(iii)]{CheltsovShramov}, each curve $\mathcal{L}_6$ and $\mathcal{L}_6^\prime$
contains a unique $\mathfrak{A}_5$-orbit of length $12$.
Moreover, the intersection
$\mathcal{L}_6\cap\mathcal{L}_6^\prime$
is an $\mathfrak{A}_5$-orbit of length $30$ by~\mbox{\cite[Lemma 6.3.12(vii)]{CheltsovShramov}}.
Thus, without loss of generality, we may assume that $\Sigma_{12}\subset \mathcal{L}_6$ and $\Sigma_{12}^\prime\subset\mathcal{L}_6^\prime$.

Similarly, denote by $\mathcal{C}$ (respectively, by $\mathcal{C}^\prime$) the smooth rational curve in $\mathrm{S}_3$
that is a proper transform of the unique $\mathfrak{A}_5$-invariant conic in $\P^2$ via~$\pi$ (respectively, via $\pi^\prime$). Then
$$
\mathcal{C}+\mathcal{C}^\prime\sim -4K_{\mathrm{S}_3},
$$
and both curves $\mathcal{C}$ and $\mathcal{C}^\prime$ are smooth rational sextic curves.
By construction, one has
$$
\mathcal{L}_6\cap\mathcal{C}=\mathcal{L}_6^\prime\cap\mathcal{C}^\prime=\varnothing.
$$
By \cite[Lemma~6.3.17]{CheltsovShramov}, one has
$\mathcal{L}_6\cap\mathcal{C}^\prime=\Sigma_{12}$, $\mathcal{L}_6^\prime\cap\mathcal{C}=\Sigma_{12}^\prime$,
and $\mathcal{C}\cap\mathcal{C}^\prime$ is an $\mathfrak{A}_5$-orbit of length~$30$.

\begin{lemma}
\label{lemma:A5-space}
The $\mathfrak{A}_5$-orbits of general points of the curves $\mathcal{B}_6$, $\mathcal{C}$, $\mathcal{C}^\prime$, $\mathcal{L}_6$,
and $\mathcal{L}_6^\prime$ are of length $60$.
These curves are the only $\mathfrak{A}_5$-invariant curves in $\mathbb{P}^3$ of degree less than eight.
\end{lemma}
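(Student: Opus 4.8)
The plan is to prove both halves of Lemma~\ref{lemma:A5-space} by a careful degree-and-orbit analysis, using the two birational contractions $\pi,\pi^\prime\colon \mathrm{S}_3\to\P^2$ and the twisted diagonal action on $Q$ as the main computational tools. First I would verify that each of the five named curves has general $\A_5$-orbit of length $60$. For $\mathcal{B}_6$ this is essentially forced by Lemma~\ref{lemma:Bring}: a general point of $\mathcal{B}_6\cong$ (a genus-four curve) cannot lie in any $\A_5$-orbit of length less than $60$, since the only shorter orbits on $\mathcal{B}_6$ are $\Sigma_{12}$ and $\Sigma_{12}^\prime$ by \cite[Lemma~5.1.5]{CheltsovShramov}. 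For $\mathcal{C}$, $\mathcal{C}^\prime$, $\mathcal{L}_6$, $\mathcal{L}_6^\prime$, which all lie on $\mathrm{S}_3$, I would push the stabilizer question down to $\P^2$ via $\pi$ or $\pi^\prime$: the images are an $\A_5$-invariant conic (for $\mathcal{C},\mathcal{C}^\prime$) and the exceptional sets, and since $\A_5$ has no fixed points or short orbits on a smooth invariant conic in $\P^2$ other than the known ones, the generic stabilizer is trivial, giving orbit length $60$.

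The bulk of the work is the second assertion: that these five are the \emph{only} $\A_5$-invariant irreducible curves of degree $d<8$ in $\P^3$. My strategy is to bound $d$ from below and then rule out each small degree. By \cite[Lemma~5.3.3(ix)]{CheltsovShramov} there are no invariant curves of degree $\le 3$, so $4\le d\le 7$. I would then argue in two regimes according to whether the curve $\Gamma$ lies on the invariant quadric $Q$ or on the cubic $\mathrm{S}_3$. If $\Gamma\subset Q$, then Lemma~\ref{lemma:Bring} already gives $\Gamma=\mathcal{B}_6$, so I may assume $\Gamma\not\subset Q$. Intersecting with $Q$ gives $Q\cdot\Gamma=2d\in\{8,10,12,14\}$, cut out as an $\A_5$-invariant subscheme of $\Gamma\cong$ its normalization; since the admissible orbit lengths are $5,10,12,15,20,30,60$, this already constrains $d$ heavily. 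Likewise, if $\Gamma\not\subset\mathrm{S}_3$ then $\mathrm{S}_3\cdot\Gamma=3d\in\{12,15,18,21\}$ must also decompose into admissible orbit lengths, which forces $\Gamma\subset\mathrm{S}_3$ for most values of $d$. So the remaining task is the classification of invariant curves of degree $<8$ actually lying on the cubic surface $\mathrm{S}_3$.

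On $\mathrm{S}_3$ the effective $\A_5$-invariant divisor classes are controlled by the $\A_5$-action on $\mathrm{Pic}(\mathrm{S}_3)$, and the anticanonical degree is $-K_{\mathrm{S}_3}\cdot\Gamma=d$ for a curve of degree $d$ (since $\mathrm{S}_3\subset\P^3$ is anticanonically embedded as a smooth cubic). I would enumerate the $\A_5$-invariant classes $D$ with $-K_{\mathrm{S}_3}\cdot D=d\le 7$, using the relations $\mathcal{L}_6+\mathcal{L}_6^\prime\sim\mathcal{C}+\mathcal{C}^\prime\sim -4K_{\mathrm{S}_3}$ together with the intersection data from \cite[Lemma~6.3.12]{CheltsovShramov} and \cite[Lemma~6.3.17]{CheltsovShramov}. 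Any irreducible invariant curve $\Gamma\subset\mathrm{S}_3$ of low anticanonical degree must realize such a class; the candidates of degree $6$ are exactly $\mathcal{C},\mathcal{C}^\prime,\mathcal{L}_6,\mathcal{L}_6^\prime$ (the last two being reducible into six lines, hence invariant as curves but with their irreducible pieces permuted), while degrees $4,5,7$ yield no new invariant irreducible curve because the corresponding $\A_5$-fixed classes either do not exist or are not represented by an irreducible member. For the reducible candidates I must treat $\mathcal{L}_6,\mathcal{L}_6^\prime$ as genuine invariant curves even though they split, so the statement is about $\A_5$-invariant curves rather than geometrically irreducible ones.

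The hardest part will be the case analysis for $\Gamma\subset\mathrm{S}_3$, specifically showing that no new invariant curve hides in degrees $4,5$, or $7$: this requires knowing the full lattice structure of $\mathrm{Pic}(\mathrm{S}_3)$ as an $\A_5$-module and verifying that the only short-degree $\A_5$-invariant classes are (multiples and combinations of) the classes of $\mathcal{C},\mathcal{C}^\prime,\mathcal{L}_6,\mathcal{L}_6^\prime$ and $-K_{\mathrm{S}_3}$ itself, and that a hyperplane section class $-K_{\mathrm{S}_3}$ (degree~$3$) is excluded by the degree bound. I expect to lean heavily on the cited lemmas from \cite{CheltsovShramov} to supply this lattice-theoretic input rather than recomputing it, so that the proof reduces to assembling the intersection-number constraints and checking each admissible $(d,\text{class})$ pair against the list of allowed orbit lengths.
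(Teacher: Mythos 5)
Your reduction for the second assertion is essentially the paper's: after excluding $d\leqslant 3$, you play the intersection numbers $\mathrm{S}_3\cdot\Gamma=3d$ and $Q\cdot\Gamma=2d$ against the admissible orbit lengths. To make that bite you need both halves together: $3d\in\{12,15,18,21\}$ decomposes into orbit lengths only for $d\in\{4,5\}$, and then $2d\in\{8,10\}$ is impossible because every $\mathfrak{A}_5$-orbit in $Q$ has length at least $12$ (note that the $Q$-test alone does \emph{not} rule out $d=6$, since $12$ is an admissible orbit length, so your claim that $Q\cdot\Gamma$ ``already constrains $d$ heavily'' is only half the story). Also beware that your opening dichotomy ``$\Gamma\subset Q$ or $\Gamma\subset\mathrm{S}_3$'' is not exhaustive; the correct split, which you implicitly use afterwards, is $\Gamma\subset\mathrm{S}_3$ versus $\Gamma\not\subset\mathrm{S}_3$.

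There are two concrete gaps. First, for the generic orbit length on $\mathcal{L}_6$ and $\mathcal{L}_6^\prime$ your plan to push down via $\pi$ cannot work: $\pi$ \emph{contracts} these six lines to points, so it carries no information about point stabilizers along them. What is needed is that the stabilizer in $\mathfrak{A}_5$ of an individual line (a group of order $10$) acts faithfully on that line, which the paper takes from \cite[Corollary~5.2.3(v)]{CheltsovShramov}; for the irreducible curves $\mathcal{B}_6$, $\mathcal{C}$, $\mathcal{C}^\prime$ the clean argument is simply that a non-planar irreducible curve cannot lie in the fixed locus of a non-trivial element of $\mathrm{PGL}_4(\mathbb{C})$ (your appeal to the absence of orbits of length $<30$ on $\mathcal{B}_6$ does not by itself exclude a generic stabilizer of order $2$). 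Second, and more seriously, the case $\Gamma\subset\mathrm{S}_3$ is the crux, and you leave it as a sketch: the paper disposes of it by citing the ready-made classification \cite[Theorem~6.3.18]{CheltsovShramov} of $\mathfrak{A}_5$-invariant curves of degree less than eight on the Clebsch cubic. Your proposed rederivation from $\mathrm{Pic}(\mathrm{S}_3)$ as an $\mathfrak{A}_5$-module is feasible but substantially harder than you indicate: the invariant sublattice has rank two, so each degree $d\leqslant 7$ admits several invariant classes (e.g.\ in degree six one has at least the classes of $\mathcal{C}$, $\mathcal{C}^\prime$, $\mathcal{L}_6$, $\mathcal{L}_6^\prime$ and further candidates such as quartics through the length-six orbit), and for each you must decide effectivity and whether an invariant member is an honest curve rather than a non-reduced or superfluous divisor. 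Unless you are prepared to carry out that enumeration in full, you should cite the classification as the paper does.
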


\begin{proof}
The stabilizers in $\mathfrak{A}_5$ of general points of the curves $\mathcal{B}_6$, $\mathcal{C}$ and $\mathcal{C}^\prime$ are trivial,
because these curves are irreducible and none of them is contained in a plane in $\mathbb{P}^3$.
Thus, the $\mathfrak{A}_5$-orbits of general points of the curves $\mathcal{B}_6$, $\mathcal{C}$, and $\mathcal{C}^\prime$ are of length $60$.
The stabilizer of each irreducible component of the curve $\mathcal{L}_6$ acts faithfully on it by \cite[Corollary~5.2.3(v)]{CheltsovShramov}.
This implies that the $\mathfrak{A}_5$-orbit of a general point of (an irreducible component of) the curve $\mathcal{L}_6$ is of length $60$.
Similarly, the $\mathfrak{A}_5$-orbit of a general point of the curve $\mathcal{L}_6^\prime$ is also of length $60$.

Let us show that $\mathcal{B}_6$, $\mathcal{C}$, $\mathcal{C}^\prime$, $\mathcal{L}_6$,
and $\mathcal{L}_6^\prime$ are the only $\mathfrak{A}_5$-invariant curves in $\mathbb{P}^3$ of degree less than eight.
Let $\Gamma$ be an $\mathfrak{A}_5$-invariant curve in $\mathbb{P}^3$ of degree $d$.
If $\Gamma\subset\mathrm{S}_3$, then the required assertion follows from \cite[Theorem~6.3.18]{CheltsovShramov}.
Thus, we may assume that $\Gamma\not\subset\mathrm{S}_3$.
One has $\Gamma\not\subset Q$ by Lemma~\ref{lemma:Bring}.

By \cite[Lemma~5.3.3(ix)]{CheltsovShramov}, one has $d\geqslant 3$.
Hence either $d=4$ or $d=5$, because
$$
\mathrm{S}_3\cdot\Gamma=3d
$$
and the lengths of $\mathfrak{A}_5$-orbits in $\mathbb{P}^3$ are $5$, $10$, $12$, $15$, $20$, $30$, and $60$.
Thus, one has
$$
Q\cdot\Gamma=2d\in\{8,10\}.
$$
This is a contradiction, because the lengths of $\mathfrak{A}_5$-orbits in the quadric $Q$ are at least~$12$.
\end{proof}

\begin{corollary}
\label{corollary:A5-space}
There exists a unique $\mathfrak{A}_5$-invariant quartic surface $S_{\mathcal{L}_6}$
(respectively, $S_{\mathcal{C}}$) in $\P^3$ that contains the curve $\mathcal{L}_6$ (respectively, $\mathcal{C}$).
The surface $S_{\mathcal{L}_6}$ contains the curve $\mathcal{L}_6^\prime$ and
$$
\mathcal{L}_6+\mathcal{L}_6^\prime=\mathrm{S}_3\vert_{S_{\mathcal{L}_6}}.
$$
The surface $S_{\mathcal{C}}$ contains the curve $\mathcal{C}^\prime$ and
$$
\mathcal{C}+\mathcal{C}^\prime=\mathrm{S}_3\vert_{S_{\mathcal{C}}}.
$$
Moreover, the surface $S_{\mathcal{L}_6}$ (respectively, $S_{\mathcal{C}}$)
is the unique $\mathfrak{A}_5$-invariant quartic surface in~$\P^3$ that contains the curve $\mathcal{L}_6^\prime$
(respectively, $\mathcal{C}^\prime$).
Furthermore, both surfaces $S_{\mathcal{C}}$ and $S_{\mathcal{L}_6}$ are smooth.
\end{corollary}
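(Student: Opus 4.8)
The plan is to push the whole problem onto the Clebsch cubic $\mathrm{S}_3$ and exploit the relations $\mathcal{L}_6+\mathcal{L}_6^\prime\sim\mathcal{C}+\mathcal{C}^\prime\sim -4K_{\mathrm{S}_3}$. The engine is the restriction sequence on invariants. Since $\mathrm{S}_3$ is a (projectively normal) cubic, the quartics vanishing on it are exactly $F_3\cdot H^0(\mathcal{O}_{\P^3}(1))$, so there is an exact sequence
$$
0\longrightarrow F_3\cdot H^0(\mathcal{O}_{\P^3}(1))\longrightarrow H^0(\mathcal{O}_{\P^3}(4))\xrightarrow{\ r\ } H^0(\mathcal{O}_{\mathrm{S}_3}(4))\longrightarrow 0.
$$
I would take $\mathfrak{A}_5$-invariants, which is exact in characteristic zero. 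Because $W_4$ is an irreducible nontrivial representation and $\mathfrak{A}_5$ has no nontrivial characters, there are no invariant linear forms, so the kernel contributes nothing: $\big(F_3\cdot H^0(\mathcal{O}_{\P^3}(1))\big)^{\mathfrak{A}_5}=F_3\cdot (W_4^*)^{\mathfrak{A}_5}=0$. Hence $r$ maps the two-dimensional pencil $H^0(\mathcal{O}_{\P^3}(4))^{\mathfrak{A}_5}=\langle F_4,F_2^2\rangle$ isomorphically onto $H^0(\mathcal{O}_{\mathrm{S}_3}(4))^{\mathfrak{A}_5}$; the same vanishing shows no invariant quartic contains $\mathrm{S}_3$, so restriction to $\mathrm{S}_3$ is always defined on members of $\mathcal{P}$.

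With this isomorphism, existence is immediate. The divisors $\mathcal{L}_6+\mathcal{L}_6^\prime$ and $\mathcal{C}+\mathcal{C}^\prime$ are $\mathfrak{A}_5$-invariant members of $|{-4K_{\mathrm{S}_3}}|=|\mathcal{O}_{\mathrm{S}_3}(4)|$, hence (there being no nontrivial characters) they correspond to genuinely invariant sections of $\mathcal{O}_{\mathrm{S}_3}(4)$. Pulling these back through the isomorphism $r$ produces invariant quartics $S_{\mathcal{L}_6}$ and $S_{\mathcal{C}}$ with $\mathrm{S}_3\vert_{S_{\mathcal{L}_6}}=\mathcal{L}_6+\mathcal{L}_6^\prime$ and $\mathrm{S}_3\vert_{S_{\mathcal{C}}}=\mathcal{C}+\mathcal{C}^\prime$, which in particular contain $\mathcal{L}_6^\prime$ and $\mathcal{C}^\prime$ respectively; these are the two displayed equalities.

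For uniqueness I would work inside $\mathrm{S}_3$. The isomorphism carries the pencil $\mathcal{P}$ onto the pencil $\mathcal{Q}\subset|\mathcal{O}_{\mathrm{S}_3}(4)|$ generated by $\mathcal{L}_6+\mathcal{L}_6^\prime$ and $\mathcal{C}+\mathcal{C}^\prime$, and distinct invariant quartics cut distinct members of $\mathcal{Q}$. By Lemma~\ref{lemma:A5-space} the two generators share no common component (six lines versus irreducible sextics), so the base locus of $\mathcal{Q}$ is zero-dimensional; therefore no one of $\mathcal{L}_6,\mathcal{L}_6^\prime,\mathcal{C},\mathcal{C}^\prime$ can be a component of two different members of $\mathcal{Q}$. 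Thus $\mathcal{L}_6+\mathcal{L}_6^\prime$ is the only member of $\mathcal{Q}$ containing $\mathcal{L}_6$ (or $\mathcal{L}_6^\prime$) and $\mathcal{C}+\mathcal{C}^\prime$ the only one containing $\mathcal{C}$ (or $\mathcal{C}^\prime$); transporting back through $r$ gives the uniqueness of $S_{\mathcal{L}_6}$ and $S_{\mathcal{C}}$.

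Finally, smoothness. By Proposition~\ref{proposition:Hashimoto} a member of $\mathcal{P}$ is singular exactly when it is one of $S_5,S_{10},S_{10}^\prime,S_{15}$, and each $S_\bullet$ is the \emph{unique} invariant quartic through the orbit $\Sigma_\bullet$; so I must rule out $\Sigma_5,\Sigma_{10},\Sigma_{10}^\prime,\Sigma_{15}\subset S_{\mathcal{L}_6}$ and $\subset S_{\mathcal{C}}$. Since $F_3$ vanishes at $\Sigma_{10}$ and $\Sigma_{15}$ but not at $\Sigma_5$ and $\Sigma_{10}^\prime$, the first two orbits lie on $\mathrm{S}_3$ and containment would force them into $\mathcal{L}_6\cup\mathcal{L}_6^\prime$ (respectively $\mathcal{C}\cup\mathcal{C}^\prime$); this is impossible, because a point of a line of $\mathcal{L}_6$ has stabilizer inside the order-$10$ line stabilizer, forcing orbit lengths in $\{12,30,60\}$, while $\mathfrak{A}_5$ acting on $\mathcal{C}\cong\P^1$ has only the icosahedral orbit lengths $\{12,20,30,60\}$ — neither list contains $10$ or $15$. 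The remaining orbits $\Sigma_5,\Sigma_{10}^\prime$ lie off $\mathrm{S}_3$, hence are invisible to $r$, and must be excluded by a direct check: either by computing the parameters $\lambda(S_{\mathcal{L}_6})$ and $\lambda(S_{\mathcal{C}})$ in \eqref{equation:quartic} and comparing them with Hashimoto's values $\tfrac{13}{20},\tfrac12,\tfrac{7}{30},\tfrac14$, or by evaluating an equation of $S_{\mathcal{L}_6}$ (resp. $S_{\mathcal{C}}$) at a representative of $\Sigma_5$ (resp. $\Sigma_{10}^\prime$). I expect this last step — separating $S_{\mathcal{L}_6}$ and $S_{\mathcal{C}}$ from the two singular members whose nodes sit off the Clebsch cubic — to be the main obstacle, since it is precisely the place where the equivariant restriction argument carries no information and an explicit computation inside the pencil is unavoidable.
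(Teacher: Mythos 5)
Your existence and uniqueness arguments are correct and take a genuinely different route from the paper: you push everything onto the Clebsch cubic via the restriction sequence and the vanishing of invariant linear forms, which makes the two displayed equalities automatic. The paper instead gets existence from an orbit-length count (a quartic through a general point of $\mathcal{C}$ or $\mathcal{L}_6$ picks up an orbit of $60>24$ points, hence the whole curve), gets uniqueness from the fact that the base locus of the pencil \eqref{equation:quartic} is the smooth genus-nine curve $F_2=F_4=0$, and then has to \emph{prove} $\mathrm{S}_3\vert_{S_{\mathcal{C}}}=\mathcal{C}+\mathcal{C}^\prime$ by running through the five candidates from Lemma~\ref{lemma:A5-space} and killing four of them with intersection numbers. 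Your identification of the residual divisor is cleaner, and your $D_{10}$-orbit-length argument ruling out $\Sigma_{10},\Sigma_{15}\subset\mathcal{L}_6\cup\mathcal{L}_6^\prime$ and $\subset\mathcal{C}\cup\mathcal{C}^\prime$ is sound.

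The genuine gap is in the smoothness claim, and you have diagnosed it yourself but drawn the wrong conclusion from the diagnosis. You exclude $S_{\mathcal{L}_6},S_{\mathcal{C}}\in\{S_{10},S_{15}\}$ but leave $S_5$ and $S_{10}^\prime$ to an ``unavoidable'' explicit computation of the parameter $\lambda$, which you do not carry out; as it stands the corollary's last sentence is unproved in your write-up. Moreover the computation is not unavoidable. The paper's argument is uniform in all four cases and uses no coordinates: since $\mathrm{Sing}(S_{\mathcal{C}})$, if nonempty, is a single $\mathfrak{A}_5$-orbit of length $5$, $10$ or $15$ by Proposition~\ref{proposition:Hashimoto}, and since $\mathcal{C}\cong\mathbb{P}^1$ and the lines of $\mathcal{L}_6$ carry no orbits of these lengths (exactly your orbit-length computation), the curves $\mathcal{C}$ and $\mathcal{L}_6$ lie in the \emph{smooth locus} of $S_{\mathcal{C}}$ and $S_{\mathcal{L}_6}$ whether or not those surfaces are nodal. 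Adjunction then gives $\mathcal{C}^2=-2$ on $S_{\mathcal{C}}$ and $\mathcal{L}_6^2=-12$ on $S_{\mathcal{L}_6}$, and the presence of an $\mathfrak{A}_5$-invariant curve of negative self-intersection disjoint from the nodes is incompatible with the structure of the invariant Picard lattice of a nodal member of the pencil, by \cite[Lemma~6.7.3(i),(ii)]{CheltsovShramov}. To complete your proof you should either import this self-intersection argument or actually perform the evaluation you postpone; without one of the two, the cases $S_{\mathcal{L}_6}=S_5$, $S_{\mathcal{L}_6}=S_{10}^\prime$, $S_{\mathcal{C}}=S_5$, $S_{\mathcal{C}}=S_{10}^\prime$ remain open.
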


\begin{proof}
Existence of the surfaces $S_{\mathcal{C}}$ and $S_{\mathcal{L}_6}$ follows from Lemma~\ref{lemma:A5-space},
because none of the curves  $\mathcal{C}$ and $\mathcal{L}_6$ is contained in $Q$ by Lemma~\ref{lemma:Bring}.
Moreover, the curve $\mathcal{C}$ (respectively, $\mathcal{L}_6$) is contained in the smooth locus of
the surface $S_{\mathcal{C}}$ (respectively, $S_{\mathcal{L}_6}$) by Proposition~\ref{proposition:Hashimoto}.
Thus, $\mathcal{C}^2=-2$ on the surface $S_{\mathcal{C}}$, and $\mathcal{L}_6^2=-12$ on the surface  $S_{\mathcal{L}_6}$.
Now it follows from \cite[Lemma~6.7.3(i),(ii)]{CheltsovShramov} that both surfaces $S_{\mathcal{C}}$ and $S_{\mathcal{L}_6}$ are smooth.

One has
$$
\mathrm{S}_3\vert_{S_{\mathcal{C}}}=\mathcal{C}+\Omega,
$$
where $\Omega$ is an effective $\mathfrak{A}_5$-invariant divisor on $S_{\mathcal{C}}$ of degree six.
By Lemma~\ref{lemma:A5-space}, the divisor $\Omega$ is one of the curves $\mathcal{B}_6$,
$\mathcal{C}$, $\mathcal{C}^\prime$, $\mathcal{L}_6$, or $\mathcal{L}_6^\prime$.
By Lemma~\ref{lemma:Bring}, one has $\Omega\ne\mathcal{B}_6$.
If $\Omega=\mathcal{C}$, then
$$
-2=\Omega\cdot\mathcal{C}=\left(\mathrm{S}_3\vert_{S_{\mathcal{C}}}-\mathcal{C}\right)\cdot\mathcal{C}=18-\mathcal{C}^2=20,
$$
which is absurd. Similarly, if $\Omega=\mathcal{L}_{6}$, then
$$
0=\Omega\cdot\mathcal{C}=\left(\mathrm{S}_3\vert_{S_{\mathcal{C}}}-\mathcal{C}\right)\cdot\mathcal{C}=20,
$$
which is absurd. Finally, if $\Omega=\mathcal{L}_{6}^\prime$, then $\Omega\cdot\mathcal{C}=24$ by \cite[Lemma~6.3.17]{CheltsovShramov},
so that
$$
24=\Omega\cdot\mathcal{C}=\left(\mathrm{S}_3\vert_{S_{\mathcal{C}}}-\mathcal{C}\right)\cdot\mathcal{C}=20,
$$
which is absurd again. Therefore, one has $\Omega=\mathcal{C}^\prime$.
Similarly, we see that $\mathcal{L}_6+\mathcal{L}_6^\prime=\mathrm{S}_3\vert_{S_{\mathcal{L}_6}}$.

Recall that the curve in $\mathbb{P}^3$ that is given by $F_2=F_4=0$  is a smooth curve of genus nine.
This implies all uniqueness assertions of the corollary.
\end{proof}

To prove Theorem~\ref{theorem:G-rigid}, we also need the following technical result.

\begin{lemma}
\label{lemma:isolation}
Let $\Sigma$ be an $\mathfrak{A}_5$-orbit on $\mathbb{P}^3$. Put
$$
m=\left\{%
\aligned
&2\ \text{if}\ \Sigma=\Sigma_5,\\
&3\ \text{if}\ \Sigma=\Sigma_{10},\ \text{or}\ \Sigma=\Sigma_{10}^\prime,\ \text{or}\ \Sigma=\Sigma_{15},\\
&4\ \text{if}\ \Sigma=\Sigma_{12}\ \text{or}\ \Sigma=\Sigma_{12}^\prime,\\
&10\ \text{if}\ |\Sigma|=20,\\
&15\ \text{if}\ |\Sigma|=30,\\
&30\ \text{if}\ |\Sigma|=60.\\
\endaligned\right.%
$$
Let $\mathcal{M}$ be the linear system consisting of all surfaces in $\mathbb{P}^3$ of degree $m$ that pass through the set $\Sigma$.
Then $\mathcal{M}$ does not have base curves and fixed components.
\end{lemma}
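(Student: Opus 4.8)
The plan is to prove the stronger statement that the base locus $\mathrm{Bs}(\mathcal{M})$ is finite, which at once yields both assertions (a fixed component is a two-dimensional, and a base curve a one-dimensional, component of $\mathrm{Bs}(\mathcal{M})$). Since $\Sigma$ is an $\mathfrak{A}_5$-orbit, the system $\mathcal{M}$ is $\mathfrak{A}_5$-invariant, hence so are $\mathrm{Bs}(\mathcal{M})$ and its fixed part. Throughout I would use the trivial lower bound $h^0(\mathcal{O}_{\mathbb{P}^3}(m)\otimes\mathcal{I}_\Sigma)\geqslant\binom{m+3}{3}-|\Sigma|$, and split the orbits into a ``regular'' and a ``small'' group.

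For the regular orbits -- those of length $10$, $20$, $30$, $60$ -- the plan is to show that $\Sigma$ imposes independent conditions on surfaces of degree $m-1$, i.e. $H^1(\mathbb{P}^3,\mathcal{I}_\Sigma(m-1))=0$. For $|\Sigma|=10$ this is exactly the statement, established in Theorem~\ref{theorem:X10}, that $\Sigma_{10}$ and $\Sigma_{10}^\prime$ lie on no quadric: then the evaluation $H^0(\mathcal{O}(2))\to\mathbb{C}^{10}$ is an injective map of $10$-dimensional spaces, hence an isomorphism, so $H^1(\mathcal{I}_\Sigma(2))=0$. For $|\Sigma|\in\{20,30,60\}$ the degree $m-1\in\{9,14,29\}$ is large compared with $|\Sigma|$, and independence follows because such an orbit has no suborbit concentrated on a line or on a surface of small degree, so that separating forms can be written down explicitly (for instance as products of the five planes $x_i=0$). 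Since $H^2(\mathcal{I}_\Sigma(m-2))=H^2(\mathcal{O}_{\mathbb{P}^3}(m-2))=0$ automatically on $\mathbb{P}^3$, the orbit $\Sigma$ is then $m$-regular, so $\mathcal{I}_\Sigma(m)$ is globally generated and $\mathrm{Bs}(\mathcal{M})=\Sigma$ is finite.

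The small orbits -- those of length $5$, $12$, $15$ -- are not $m$-regular and require a direct argument. Because $\mathfrak{A}_5$ is perfect it has no nontrivial characters, so the fixed part of $\mathcal{M}$ is $V(f)$ for an $\mathfrak{A}_5$-invariant form $f$ with $\deg f\leqslant m$. If $\deg f\geqslant 1$ then every member is divisible by $f$, forcing $h^0(\mathcal{I}_\Sigma(m))\leqslant\binom{m+2}{3}$, which contradicts the lower bound once $\binom{m+2}{2}>|\Sigma|$; this disposes of $\Sigma_5$ ($6>5$) and $\Sigma_{12}$ ($15>12$). For $\Sigma_{15}$ that bound is unavailable, but the only invariant forms of degree $\leqslant 3$ are $F_2$ and $F_3$: the first cannot divide all members since $\Sigma_{15}\cap Q=\varnothing$ and no linear form passes through $\Sigma_{15}$, while $V(F_3)$ has degree $m=3$ and cannot constitute the whole (positive-dimensional) system. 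For base curves I would use $\mathrm{Bs}(\mathcal{M})\subseteq M_1\cap M_2$ for general members, a curve of degree $\leqslant m^2$. For $\Sigma_5$ this degree is $\leqslant 4$, below the minimal degree $6$ of an $\mathfrak{A}_5$-invariant curve (Lemma~\ref{lemma:A5-space}), so there is nothing to exclude. For $\Sigma_{12}$ and $\Sigma_{15}$ an invariant base curve of degree $<8$ is one of $\mathcal{B}_6,\mathcal{C},\mathcal{C}^\prime,\mathcal{L}_6,\mathcal{L}_6^\prime$, each excluded by comparing $h^0(\mathcal{I}_Z(m))$ with $h^0(\mathcal{I}_\Sigma(m))$ (and, for $\Sigma_{15}$, because it is disjoint from all five, their orbits having length $12$, $30$, $60$). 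Finally, an invariant base curve $Z$ of degree between $8$ and $m^2$ is ruled out by a Castelnuovo-type estimate: containment of $Z$ in every member gives $h^0(\mathcal{I}_\Sigma(m))\leqslant h^0(\mathcal{I}_Z(m))$, and bounding $p_a(Z)$ makes $\mathcal{O}_Z(m)$ nonspecial, so that $h^0(\mathcal{I}_Z(m))$ is too small to accommodate the lower bound on $h^0(\mathcal{I}_\Sigma(m))$.

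I expect the main obstacle to be precisely this last point: excluding intermediate-degree base curves in the irregular cases $\Sigma_{12}$ and $\Sigma_{15}$, since such curves lie outside the degree-$<8$ classification of Lemma~\ref{lemma:A5-space} and must be controlled purely by genus and dimension estimates. A close second is verifying the independent-conditions hypothesis uniformly for the large orbits, where one must check that their special position does not obstruct interpolation in degree $m-1$.
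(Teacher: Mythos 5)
Your overall strategy (show $\mathrm{Bs}(\mathcal{M})$ is finite via cohomological regularity and dimension counts) is genuinely different from the paper's, and parts of it are sound: the fixed-component exclusion via $\binom{m+2}{2}>|\Sigma|$ works for $\Sigma_5$ and $\Sigma_{12}$, the enumeration of invariant forms of degree $\leqslant 3$ handles the fixed part for $\Sigma_{15}$, and the $3$-regularity argument for $\Sigma_{10},\Sigma_{10}'$ from the no-quadric fact is correct and clean. But the two steps you yourself flag as obstacles are real gaps, not loose ends. First, for $|\Sigma|\in\{20,30,60\}$ your interpolation claim sits exactly on the boundary $|\Sigma|=2(m-1)+2$ of the classical ``independent conditions'' results: to conclude $H^1(\mathcal{I}_\Sigma(m-1))=0$ you must rule out both $m+1$ collinear points of $\Sigma$ and all $2m$ points lying on a conic, and the mechanism you propose (products of the coordinate planes $x_i=0$) cannot produce separating forms, since such products are $\mathfrak{A}_5$-semi-invariant and vanish on whole orbits rather than on $\Sigma\setminus\{P\}$. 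Second, for $\Sigma_{12}$ and $\Sigma_{15}$ the exclusion of invariant base curves of degree between $8$ and $m^2$ (up to $16$, resp.\ $9$) is not carried out, and the Castelnuovo-type bound you invoke only controls $h^0(\mathcal{O}_Z(m))$, whereas you need a lower bound on the number of conditions $Z$ imposes, i.e.\ on the rank of the restriction map. Note also that your disjointness argument for $\Sigma_{15}$ is not by itself valid: a base curve of $\mathcal{M}$ need not contain $\Sigma$, so the fact that the five sextics miss $\Sigma_{15}$ does not exclude them (e.g.\ $h^0(\mathcal{I}_{\mathcal{B}_6}(3))=5$ equals the lower bound for $h^0(\mathcal{I}_{\Sigma_{15}}(3))$, so a pure dimension count is inconclusive there and one must actually check that not every cubic through $\mathcal{B}_6$ passes through $\Sigma_{15}$).

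The paper avoids all of this by exhibiting, in each case, explicit members of $\mathcal{M}$ with visibly finite common intersection. For $\Sigma_{10},\Sigma_{10}',\Sigma_{15}$ it observes that the orbit is the singular locus of a nodal quartic, hence cut out by the partial derivatives, which are cubics; so the base locus of $\mathcal{M}$ is exactly $\Sigma$. For $\Sigma_{12}$ it uses $Q+(\text{quadric})$, $\mathrm{S}_3+(\text{plane})$ and $F_4=0$, whose intersection is $\mathcal{B}_6\cap\{F_4=0\}=\Sigma_{12}\cup\Sigma_{12}'$ by Lemma~\ref{lemma:Bring}. For the large orbits off $Q$ it uses the three forms $F_5^2-\alpha F_2^5$, $F_4-\beta F_2^2$, $F_3^2-\gamma F_2^3$, finite intersection following from the fact that $F_2=F_3=F_4=F_5=0$ has no solutions; for large orbits on $Q$ it restricts to $Q$ and runs a bidegree count against Lemma~\ref{lemma:A5-space}. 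If you want to salvage your route, the missing ingredient is precisely this kind of explicit construction of invariant members of $\mathcal{M}$; otherwise you must supply the boundary-case interpolation argument and the intermediate-degree curve exclusion in full.
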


\begin{proof}
If $\Sigma=\Sigma_5$, then $\mathcal{M}$ does not have fixed components by \cite[Lemma~5.3.3(vi)]{CheltsovShramov},
and $\mathcal{M}$ does not have base curves, because $\mathbb{P}^3$ does not have $\mathfrak{A}_5$-invariant curves of degree less than six
by Lemma~\ref{lemma:A5-space}.
Similarly, if $\Sigma=\Sigma_{10}$ (respectively, $\Sigma=\Sigma_{10}^\prime$, $\Sigma=\Sigma_{15}$),
then $\Sigma$ is a singular locus of the nodal quartic surface $S_{10}$ (respectively, $S_{10}^\prime$, $S_{15}$).
Since the singular locus of a quartic surface is cut out by cubics,
we see that the base locus of $\mathcal{M}$ is $\Sigma$ in the latter case.
If $\Sigma=\Sigma_{12}$ or $\Sigma=\Sigma_{12}^\prime$,
then $\mathcal{M}$ does not have base curves and fixed components by Lemma~\ref{lemma:Bring}.
Thus, we may assume that $|\Sigma|\geqslant 20$, so that $m=\frac{|\Sigma|}{2}$.

Suppose that $\Sigma$ is not contained in the surface $Q$.
Let $P=[a_0:a_1:a_2:a_3:a_4]$ be a point in $\Sigma$.  Then $F_2(a_0,a_1,a_2,a_3,a_4)\ne 0$.
Put
$$
\alpha=\frac{F_5^2(a_0,a_1,a_2,a_3,a_4)}{F_2^5(a_0,a_1,a_2,a_3,a_4)},\
\beta=\frac{F_4(a_0,a_1,a_2,a_3,a_4)}{F_2^2(a_0,a_1,a_2,a_3,a_4)},\
\gamma=\frac{F_3^2(a_0,a_1,a_2,a_3,a_4)}{F_2^3(a_0,a_1,a_2,a_3,a_4)}.
$$
Then the system of equations
$$
\left\{%
\aligned
&F_5^2=\alpha F_2^5,\\
&F_4=\beta F_2^2,\\
&F_3^2=\gamma F_2^3,\\
\endaligned\right.%
$$
has finitely many solutions in $\mathbb{P}^3$, because the system of equations $F_2=F_3=F_4=F_5$ has no solution in $\mathbb{P}^3$.
Using the forms $F_5^2-\alpha F_2^5$, $F_4-\beta F_2^2$, and $F_3^2-\gamma F_2^3$, we can produce
three surfaces in $\mathcal{M}$ that have only finitely many common points.
This shows that the base locus of $\mathcal{M}$ is zero-dimensional as requested.

To complete the proof, we may assume that $\Sigma\subset Q$.
Then $\mathcal{M}$ contains divisors of the form $Q+R$,
where $R$ is any surface in $\mathbb{P}^3$ of degree $m-2$.
Thus, the base locus of $\mathcal{M}$ is contained in $Q$.
Denote by $T$ the hyperplane section of $Q$.
Let $\mathcal{M}_Q$ be the linear system consisting of all curves in $|mT|$ that pass through $\Sigma$.
Then $\mathcal{M}_Q$ is not empty, because
$$
h^0\left(Q,\mathcal{O}_{Q}(mT)\right)=(m+1)^2>|\Sigma|.
$$
Moreover, every curve in $\mathcal{M}_Q$ is cut out on $Q$ by a surface in $\mathcal{M}$,
because we have a surjection
$$
H^0\left(\mathbb{P}^3,\mathcal{O}_{\mathbb{P}^3}(m)\right)\twoheadrightarrow H^0\left(Q,\mathcal{O}_{Q}(mT)\right).
$$
Thus, $\mathcal{M}_Q$ is a restriction of the linear system $\mathcal{M}$ to the surface $Q$, and $Q$ is not a fixed surface of $\mathcal{M}$.
In particular, the base loci of $\mathcal{M}$ and $\mathcal{M}_Q$ are the same.

Suppose that the base locus of $\mathcal{M}_Q$ contains a curve.
Since $\mathcal{M}_Q$ is  $\mathfrak{A}_5$-invariant, the base locus of $\mathcal{M}_Q$ contains an $\mathfrak{A}_5$-invariant curve.
Let us denote it by $Z$.
Then $Z$ is a divisor of bi-degree $(a,b)$ on $Q\cong\mathbb{P}^1\times\mathbb{P}^1$, where $a\leqslant m$ and $b\leqslant m$.
One has
$$
(m-a+1)(m-b+1)=h^0\left(Q,\mathcal{O}_{Q}(mT-Z)\right)\geqslant h^0\left(Q,\mathcal{O}_{Q}(mT)\right)-|\Sigma|=(m+1)^2-|\Sigma|,
$$
which implies that
$$
|\Sigma|\geqslant am+mb+(a+b)-ab.
$$
Without loss of generality, we may assume that $a\leqslant b$.
This gives
$$
2m=|\Sigma|\geqslant mb,
$$
so that $a\leqslant b\leqslant 2$.
Thus, the degree of the curve $Z$ is $a+b\leqslant 4$, which contradicts Lemma~\ref{lemma:A5-space}.
\end{proof}

Now we are ready to prove Theorem~\ref{theorem:G-rigid}.

\begin{proof}[{Proof of Theorem~\ref{theorem:G-rigid}}]
By Theorem~\ref{theorem:X5}, the threefold $X_5$ is not $\mathfrak{A}_5$-birationally superrigid.
Thus, we may assume that $X\ne X_5$. Suppose that $X$ is not $\mathfrak{A}_5$-birationally superrigid.
Then it follows from \cite[Corollary~3.3.3]{CheltsovShramov} that there exists
an $\mathfrak{A}_5$-invariant mobile linear system $\mathcal{D}$ on $X$ such that
the singularities of the log pair $\left(X,\frac{2}{n}\mathcal{D}\right)$ are not canonical,
where $n$ is a positive integer that is defined by $\mathcal{D}\sim nH$.

Let $Z$ be an irreducible subvariety of $X$ that is a center of canonical singularities of the log pair $\left(X,\frac{2}{n}\mathcal{D}\right)$, see \cite[Definition~2.4.1]{CheltsovShramov}.
Then either $Z$ is a point, or $Z$ is an irreducible curve.
Let $\{Z_1,\ldots,Z_r\}$ be the  $\mathfrak{A}_5$-orbit of $Z=Z_1$.
If $Z$ is a curve, then
\begin{equation}
\label{equation:Skoda}
\mathrm{mult}_{Z_i}\left(D\right)>\frac{n}{2}
\end{equation}
for each $Z_i$ and a general surface $D$ in $\mathcal{D}$, see, for example, \cite[Lemma~2.4.4]{CheltsovShramov}.
If $Z$ is a smooth point, then
\begin{equation}
\label{equation:Iskovskikh}
\mathrm{mult}_{Z_i}\left(D_1\cdot D_2\right)>n^2
\end{equation}
for each $Z_i$ and two general surfaces $D_1$ and $D_2$ in~$\mathcal{D}$, see, for example, \cite[Theorem~2.5.2]{CheltsovShramov}.
We will use \eqref{equation:Skoda} to show that $Z$ is not a curve,
then we will use \eqref{equation:Iskovskikh} to show that $Z$ is not a smooth point of $X$.
Finally we will use \cite[Theorem~1.7.20]{CheltsovUMN} to exclude the case when $Z$ is a singular point of $X$.

Suppose that $Z$ is a curve.
Put $\Sigma=Z_1+\ldots+Z_r$.
For general surfaces $D_1$ and $D_2$ in~$\mathcal{D}$, the inequality \eqref{equation:Skoda} gives
\begin{multline*}
2n^2=H\cdot D_1\cdot D_2\geqslant\sum_{i=1}^{r}\mathrm{mult}_{Z_i}\left(D_1\cdot D_2\right)H\cdot Z_i \geqslant\\
\geqslant\sum_{i=1}^{r}\mathrm{mult}_{Z_i}\left(D_1\right)\mathrm{mult}_{Z_i}\left(D_2\right)H\cdot Z_i>\frac{n^2}{4}H\cdot\Sigma,
\end{multline*}
which implies that $H\cdot\Sigma<8$.
Applying Lemma~\ref{lemma:A5-space}, we see that
$$
H\cdot\Sigma=6,
$$
and $\tau(\Sigma)$ is one of the following curves: $\mathcal{B}_6$, $\mathcal{C}$, $\mathcal{C}^\prime$, $\mathcal{L}_6$, or $\mathcal{L}_6^\prime$.
Since $H\cdot\Sigma=6$ and $\tau(\Sigma)$ is a curve of degree six,
the double cover $\tau$ induces an isomorphism $\Sigma\xrightarrow{\sim}\tau(\Sigma)$.

We claim that $\tau(\Sigma)\ne \mathcal{B}_6$.
Indeed, $\mathcal{B}_6$ is not contained in $S$, and
$$
\mathcal{B}_6\cap S=\Sigma_{12}\cup\Sigma_{12}^\prime
$$
by Lemma~\ref{lemma:Bring}.
This shows that $\mathcal{B}_6$ intersects the surface $S$ transversally in $24$ points.
Hence, the preimage of the curve $\mathcal{B}_6$ via $\tau$ is a smooth irreducible curve
that is a double cover of $\mathcal{B}_6$ branched over $\Sigma_{12}\cup\Sigma_{12}^{\prime}$.
In particular, the preimage of the curve $\mathcal{B}_6$ via $\tau$ is not isomorphic to $\mathcal{B}_6$,
so that $\tau(\Sigma)\ne \mathcal{B}_6$.

The threefold $X$ is a quartic hypersurface in the weighted projective space $\mathbb{P}(1,1,1,1,2)$
with weighted homogeneous coordinates $x_1, x_2, x_3, x_4$ and $w$ defined by equation
$$
w^2=F_4-\lambda F_2^2.
$$
Denote by $\mathcal{P}$ the pencil that is cut out on $X$ by $\alpha w=\beta F_2$, where $[\alpha:\beta]\in\mathbb{P}^1$.

Let $Y([\alpha:\beta])$ be a surface in $\mathcal{P}$ corresponding to the point $[\alpha:\beta]$.
If $[\alpha:\beta]\ne [0:1]$, then $Y([\alpha:\beta])$ is mapped isomorphically by $\tau$ to the surface in $\mathbb{P}^3$ that is given by
$$
F_4=\left(\lambda+\frac{\beta^2}{\alpha^2}\right)F_2^2.
$$
If $[\alpha:\beta]=[0:1]$, then $\tau$ induces a double cover $Y([\alpha:\beta])\to Q$ branched over a curve
that is cut out by $F_4=0$ on the quadric $Q$.
Recall that this curve is smooth.
In particular, $Y([\alpha:\beta])$ is either a smooth $K3$ surface or a nodal $K3$ surface by Proposition~\ref{proposition:Hashimoto}.

Let $P$ be a general point in $\Sigma$.
Then its $\mathfrak{A}_5$-orbit is of length $60$,
because the $\mathfrak{A}_5$-orbit of the point $\tau(P)$ is of length $60$ by Lemma~\ref{lemma:A5-space}.
Let $Y$ be a surface in $\mathcal{P}$ such that $P\in Y$.
Then $\Sigma\subset Y$, since otherwise we would have
$$
12=Y\cdot\Sigma\geqslant 60.
$$
Hence, we have $\tau(\Sigma)\subset\tau(Y)$, which implies that $\tau(Y)\ne Q$,
because none of the curves $\mathcal{C}$, $\mathcal{C}^\prime$, $\mathcal{L}_6$, and $\mathcal{L}_6^\prime$
is contained in $Q$ by Lemma~\ref{lemma:Bring}.
Thus, $\tau(Y)$ is a (possibly nodal) $\mathfrak{A}_5$-invariant quartic surface, and $\tau$ induces an isomorphism $Y\xrightarrow{\sim}\tau(Y)$.

By Corollary~\ref{corollary:A5-space}, the surface $\tau(Y)$ is the surface $S_{\mathcal{C}}$ (respectively, $S_{\mathcal{L}_6}$)
in the case when $\tau(\Sigma)$ is one of the curves $\mathcal{C}$ or $\mathcal{C}^\prime$ (respectively, $\mathcal{L}_6$ or $\mathcal{L}_6^\prime$).
In particular, the surface $\tau(Y)$ is smooth.

By Corollary~\ref{corollary:A5-space}, the surface $Y$ contains an $\mathfrak{A}_5$-invariant curve $\Sigma^\prime$
such that $\Sigma^\prime\ne\Sigma$, the curves $\Sigma^\prime$ and $\Sigma$ are isomorphic, and
\begin{equation}
\label{equation:3H}
\Sigma+\Sigma^\prime\sim 3H\vert_{Y}.
\end{equation}
Indeed, we have
$\mathcal{L}_6+\mathcal{L}_6^\prime=\mathrm{S}_3\vert_{S_{\mathcal{L}_6}}$
and
$\mathcal{C}+\mathcal{C}^\prime=\mathrm{S}_3\vert_{S_{\mathcal{C}}}$
by Corollary~\ref{corollary:A5-space}.
Thus, if $\tau(\Sigma)=\mathcal{C}$ (respectively, $\tau(\Sigma)=\mathcal{C}^\prime$, $\tau(\Sigma)=\mathcal{L}_6$, $\tau(\Sigma)=\mathcal{L}_6^\prime$),
then we can let $\Sigma^\prime$ to be the preimage on $Y$ of the curve $\mathcal{C}^\prime$ (respectively, $\mathcal{C}$, $\mathcal{L}_6^\prime$, $\mathcal{L}_6$).

By construction, we have $\Sigma^2=(\Sigma^\prime)^2<0$.
On the other hand, we have
$$
\mathcal{M}\vert_{Y}=l\Sigma+l^\prime\Sigma^\prime+\Omega\sim nH\vert_{Y},
$$
where $l$ and $l^\prime$ are non-negative integers, and $\Omega$ is an effective divisor on the surface $Y$
whose support does not contain irreducible components of the curves $\Sigma$ and $\Sigma^\prime$.
By~\eqref{equation:3H}, we have
$$
(3l-n)\Sigma+(3l^\prime-n)\Sigma^\prime+3\Omega\sim 0.
$$
Moreover, one has $3l-n>0$, because $l>\frac{n}{2}$ by \eqref{equation:Skoda}.
Hence, we obtain $3l^\prime-n<0$, so that
$$
0\leqslant (3l-n)\Sigma\cdot\Sigma^\prime+3\Omega\cdot\Sigma^\prime=(n-3l^\prime)\left(\Sigma^\prime\right)^2<0,
$$
which is absurd. The obtained contradiction shows that $Z$ is not a curve.

We see that $Z$ is a point.
Denote by $\Xi$ its $\mathfrak{A}_5$-orbit.
Define an integer $m$ as in Lemma~\ref{lemma:isolation}.
Let $\mathcal{M}$ be the linear system consisting of all surfaces in $|mH|$ passing through $\Xi$.
Then $\mathcal{M}$ does not have base curves and fixed components by Lemma~\ref{lemma:isolation}.
Thus, if $Z$ is a smooth point of $X$, then \eqref{equation:Iskovskikh} gives
$$
n^2|\Xi|\geqslant 2mn^2=M\cdot D_1\cdot D_2\geqslant\sum_{i=1}^{r}\mathrm{mult}_{Z_i}\left(D_1\cdot D_2\right)>n^2|\Xi|
$$
for a general surface $M$ in $\mathcal{M}$, and two general surfaces $D_1$ and $D_2$ in $\mathcal{D}$.
Therefore, we see that $Z$ is a singular point of $X$.
By Proposition~\ref{proposition:Hashimoto}, either $X=X_{10}$, or $X=X_{10}^\prime$, or $X=X_{15}$.
Put $r=|\Xi|$, so that either $r=10$ or $r=15$.

Let $f\colon W\to X$ be a blow up of $\Xi$, and let $E_1,\ldots,E_{r}$ be the $f$-exceptional surfaces.
Denote by $\widetilde{D}_1$ and $\widetilde{D}_2$ the proper transforms via $f$ of the surfaces $D_1$ and $D_2$, respectively.
Then
$\widetilde{D}_1\sim \widetilde{D}_2\sim f^*(nH)-s\sum_{i=1}^{r}E_i$
for some positive integer $s$.
Denote by $\widetilde{M}$  the proper transform of the surface $M$ via $f$.
Then
$\widetilde{M}\sim f^*(3H)-t\sum_{i=1}^{r}E_i$
for some positive integer~$t$. Actually, one can show that $t=1$, but we will not use this.
Since $\mathcal{M}$ does not have base curves and fixed components, the divisor $\widetilde{M}$ is nef.
Thus, we have
$$
0\leqslant\widetilde{M}\cdot\widetilde{D}_1\cdot\widetilde{D}_2=6n^2-2rs^2t\leqslant 6n^2-2rs^2,
$$
which implies that $s\leqslant\frac{n\sqrt{3}}{\sqrt{r}}$.
On the other hand, $s>\frac{n}{2}$ by \cite[Theorem~1.7.20]{CheltsovUMN}.
This gives $r=10$, so that either $\tau(\Xi)=\Sigma_{10}$ or $\tau(\Xi)=\Sigma_{10}^\prime$.

By Remark~\ref{remark:10-points}, there exists a plane $\Pi\subset\mathbb{P}^3$
that contains at least four points of $\tau(\Xi)$ such that no three of them are collinear.
Let $C$ be a general conic in $\Pi$ that passes through these four points.
Then its preimage on $X$ via $\tau$ splits as a union of two smooth rational curves.
Denote by $\widetilde{C}$ the proper transform of one of these curves on $W$ via $f$.
Then $\widetilde{C}$ is not contained in the support of $\widetilde{D}_1$.
On the other hand, we have
$$
\widetilde{D}_1\cdot\widetilde{C}=\left(f^*(nH)-s\sum_{i=1}^{r}E_i\right)\cdot \widetilde{C}=2n-s\sum_{i=1}^{r}E_i\cdot\widetilde{C}\leqslant 2n-4s,
$$
which implies that $s\leqslant\frac{n}{2}$.
This is again impossible, because we know that~\mbox{$s>\frac{n}{2}$}.
\end{proof}

\end{document}